\pgfplotsset{compat=newest}
\newtheoremstyle{mytheorem}
	{}
	{}
	{\itshape}
	{}
	{\bfseries}
	{.}
	{ }
	{}
\newtheoremstyle{mydef}
	{}
	{}
	{}
	{}
	{\bfseries}
	{.}
	{ }
	{}
\theoremstyle{mytheorem}
	\newtheorem{theorem}{Theorem}
	\newaliascnt{lemma}{theorem}
	\newtheorem{lemma}[lemma]{Lemma}
	\newaliascnt{proposition}{theorem}
	\newaliascnt{corollary}{theorem}
\theoremstyle{mydef}
	\newaliascnt{definition}{theorem}
	\newtheorem{definition}[definition]{Definition}
	\newaliascnt{example}{theorem}
	\newtheorem{example}[example]{Example}
	\newaliascnt{remark}{theorem}
	\newtheorem{remark}[remark]{Remark}
	\newaliascnt{assumption}{theorem}
\renewcommand{\vec}[1]{{\mathbf{#1}}}
\newcommand{\hvec}[1]{\hat{{\mathbf{#1}}}}
\newcommand{\bvec}[1]{\bar{{\mathbf{#1}}}}
\newcommand{\jump}[1]{[\![{#1}]\!]}
\newcommand{\avg}[1]{\{\!\!\{{#1}\}\!\!\} }
\newcommand{\divt}{{\operatorname{div}\,}}
\newcommand{\divhat}{\widehat{\operatorname{div}}\,}
\newcommand{\divtilde}{\widetilde{\operatorname{div}}\,}
\newcommand{\divbar}{\overline{\operatorname{div}}\,}
\newcommand{\diam}{{\operatorname{diam}\,}}
\newcommand{\R}{{\mathbb R}}
\newcommand{\N}{{\mathbb N}}
\newcommand{\rti}{{I_k^{\text{RT}}}}
\newcommand{\hrti}{{\hat{I}_k^{\text{RT}}}}
\newcommand{\bdmi}{{I_k^{\text{BDM}}}}
\newcommand{\bdmiorder}[1]{{I_{#1}^{\text{BDM}}}}
\newcommand{\tbdmi}{{\tilde{I}_k^{\text{BDM}}}}
\newcommand{\hbdmi}{{\hat{I}_k^{\text{BDM}}}}
\newcommand{\hbdmiorder}[1]{{\hat{I}_{#1}^{\text{BDM}}}}
\newcommand{\bbdmi}{{\bar{I}_k^{\text{BDM}}}}
\newcommand{\vertiii}[1]{{\left\vert\kern-0.25ex\left\vert\kern-0.25ex\left\vert #1 
    \right\vert\kern-0.25ex\right\vert\kern-0.25ex\right\vert}}
\newcommand{\MAC}{{\mathit{MAC}}}
\newcommand{\RVP}{{\mathit{RVP}}}
\newcommand{\abssec}[1]{\noindent\small {\bfseries #1\quad}\ignorespaces}
\renewenvironment{abstract}{\abssec{Abstract}}{\par\vspace{1em}}
\newenvironment{keywords}{\abssec{Keywords}}{\par\vspace{1em}}
\newenvironment{MSC}{\abssec{Mathematics Subject Classification (2010)}}{\par\vspace{1em}}
\title{Brezzi-Douglas-Marini interpolation of any order on anisotropic triangles and tetrahedra}
\date{\today}
\author{Thomas Apel \and Volker Kempf}
\begin{document}
\maketitle

\begin{abstract}
	Recently, the $\vec{H}(\operatorname{div})$-conforming finite element families for second order elliptic problems have come more into focus, since due to hybridization and subsequent advances in computational efficiency their use is no longer mainly theoretical. Their property of yielding exactly divergence-free solutions for mixed problems makes them interesting for a variety of applications, including incompressible fluids. In this area, boundary and interior layers are present, which demand the use of anisotropic elements. 
	
	While for the Raviart-Thomas interpolation of any order on anisotropic tetrahedra optimal error estimates are known, this contribution extends these results to the Brezzi-Douglas-Marini finite elements. Optimal interpolation error estimates are proved under two different regularity conditions on the elements, which both relax the standard minimal angle condition. Additionally a numerical application on the Stokes equations is presented to illustrate the findings.
\end{abstract}
\begin{keywords}
	anisotropic finite elements, interpolation error estimate, Brezzi-Douglas-Marini element, maximal angle condition, regular vertex property
\end{keywords}
\begin{MSC}
	65D05 $\cdot$ 65N30  
\end{MSC}

\section{Introduction}
The Brezzi-Douglas-Marini finite element was introduced for two dimensions in \cite{BrezziDouglasMarini1985} and generalized to the three-dimensional case in \cite{Nedelec1986}. 
Similarly to the Raviart-Thomas element, see \cite{RaviartThomas1977,Nedelec1980}, it is commonly used for $\vec{H}(\operatorname{div})$-conforming approximation of second order elliptic problems \cite{BrezziDouglasDuranFortin1987}. 
Applications include mixed methods for incompressible flow problems, as seen in e.g. \cite{SchroederLehrenfeldLinkeLube2018,SchroederLube2018,CockburnKanschatSchotzauSchwab2002,CockburnKanschatSchotzau2007}, where the elements by construction yield divergence-free approximations of the solution. 
In the mentioned references, the used meshes are restricted to shape regular, i.e. isotropic, triangulations, where for the lowest order case the interpolation error estimate, see \cite{BrezziDouglasMarini1985},
\begin{equation} \label{eq:intro_interpolation_estimate}
	\norm{\vec{v} - \bdmi \vec{v}}_{0,T} \lesssim h_T \abs{\vec{v}}_{1,T},
\end{equation}
holds. A possible proof follows by a Bramble-Hilbert type lemma, see e.g. \cite{DupontScott1980}, on a reference element $\hat{T}$, and a subsequent transformation $\vec{x} = J_T \hat{\vec{x}} + \vec{x}_0$ to the element $T$ via the contra-variant Piola transformation
\begin{equation*}
	\vec{v} = \frac{1}{\det J_T} J_T \hat{\vec{v}}.
\end{equation*}

For an anisotropic element $T$, where we have e.g. $h_3 \gg h_1, h_2$, this approach can lead to an estimate of the type 
\begin{align*}
	\norm{\vec{v} - \bdmi \vec{v}}_{0,T} \lesssim& \sum_{\abs{\alpha}\leq 1} h^\alpha \norm{D^\alpha v_1}_{0,T} \left(1+\frac{h_2}{h_1}+ \frac{h_3}{h_1}\right) \\
	&+ \sum_{\abs{\alpha}\leq 1} h^\alpha \norm{D^\alpha v_2}_{0,T} \left(1+\frac{h_1}{h_2}+ \frac{h_3}{h_2}\right) \\
	&+ \sum_{\abs{\alpha}\leq 1} h^\alpha \norm{D^\alpha v_3}_{0,T} \left(1+\frac{h_1}{h_3}+ \frac{h_2}{h_3}\right).
\end{align*}
The potentially huge terms $\frac{h_3}{h_1}, \frac{h_3}{h_2}$ on the right hand side would not appear, if stability estimates
\begin{equation*}
	\norm{(\bdmi \vec{v})_i}_{0,T} \lesssim \norm{v_i}_{1,T}
\end{equation*}
could be proved. Unfortunately this is not a valid estimate, since it would imply, as observed in \cite{AcostaApelDuranLombardi2011} for the Raviart-Thomas element, that 
\begin{equation*}
	v_i \equiv 0 \quad \Rightarrow \quad (\bdmi \vec{v})_i \equiv 0,
\end{equation*}
which is not valid in general, e.g. for $\vec{v} = (0,0, x_1^2)^T$, see also \autoref{subsec:necessity_MAC}. So new stability estimates are needed to incorporate anisotropic elements in the theory.

We consider two conditions on elements, which relax the usual minimum angle condition to allow for anisotropic elements. Both generalize the classical maximum angle condition for triangles from \cite{Synge1957} to three dimensions. The first one was introduced in \cite{Krizek1992} and is widely used, see e.g. \cite{AcostaApelDuranLombardi2011,AcostaDuran1999,Apel1999,DuranLombardi2008} and is a quite literal generalization: An element satisfies a \emph{maximum angle condition}, if all angles are uniformly bounded away from $\pi$. The second condition from \cite{AcostaDuran1999} is more technical, and in three dimensions more restrictive, yielding a subset of elements satisfying a maximum angle condition: An element satisfies a \emph{regular vertex property}, if there is a vertex, for which the outgoing vectors along the edges are uniformly linearly independent. Proper definitions will be given in \autoref{sec:notation_definitions}.

For the Raviart-Thomas interpolation, optimal results for anisotropic interpolation are known, see e.g. \cite{DuranLombardi2008,AcostaApelDuranLombardi2011,AcostaDuran1999}. Starting from the stability estimate
\begin{equation*}
	\norm{(\hrti \hat{\vec{v}})_i}_{0,\hat{T}} \lesssim \norm{\hat{v}_i}_{1,\hat{T}} + \norm{\divhat \hat{\vec{v}}}_{0,\hat{T}},\quad i=1,2,3,
\end{equation*}
see \cite[Lemma 3.3]{AcostaApelDuranLombardi2011}, on the reference element $\hat{T}$, see Figures \ref{fig:ReferenceTriangle} and \ref{fig:ReferenceTetrahedron}, the authors get in \cite[Theorem 3.1]{AcostaApelDuranLombardi2011} to the stability estimate
\begin{equation*}
	\norm{\rti \vec{v}}_{0,{T}} \lesssim \norm{\vec{v}}_{0,{T}} + \sum_{j=1}^3 h_j \norm{\pdv{\vec{v}}{\vec{l}_j}}_{0,T} + \norm{\divt \vec{v}}_{0,{T}},
\end{equation*}
for a general element satisfying a regular vertex property, where $\pdv{}{\vec{l}_i}$ is the directional derivative in the directions $\vec{l}_i$, which in our case are certain unit vectors along edges of the element. Using a Bramble-Hilbert type argument, the interpolation error estimates on these elements are shown in \cite[Theorem 6.2]{AcostaApelDuranLombardi2011} and for $k \geq 0$, $0\leq m \leq k$ take the form
\begin{equation}\label{eq:introduction_interpolation_error}
	\norm{\vec{v}-\rti\vec{v}}_{0,T} \lesssim \sum_{\abs{\alpha} = m + 1} h^\alpha \norm{D^\alpha_{\vec{l}}\vec{v}}_{0,T} + h_T^{m+1} \norm{D^m\divt \vec{v}}_{0,T},
\end{equation}
where we use the a multi-index notation for the directional derivatives $D^\alpha_{\vec{l}} = \frac{\partial^{\abs{\alpha}}}{\partial \vec{l}_1^{\alpha_1}\ldots \partial \vec{l}_d^{\alpha_d}}$.
This type of estimate is particularly useful when $\divt \vec{v} = 0$, so that the estimate reduces to a purely anisotropic estimate in the spirit of \cite{Apel1999}. 

For elements satisfying a maximum angle condition, but no regular vertex condition, a weaker stability estimate is obtained on the reference element $\bar{T}$, see \autoref{fig:ReferenceTetrahedron_noRVP}, in \cite[Lemma 4.3]{AcostaApelDuranLombardi2011}, which leads by similar arguments to an interpolation error estimate like \eqref{eq:intro_interpolation_estimate}, see \cite[Theorem 6.3]{AcostaApelDuranLombardi2011}, but with a relaxed condition on the triangulation. 
The discussion from \cite[Section 5]{AcostaApelDuranLombardi2011} shows that this estimate is sharp.
Note that the maximum angle condition is a necessary condition, see \autoref{subsec:necessity_MAC}.

Following the approach from the reference, we prove the analogs of \cite[Lemma 3.3 and Lemma 4.3]{AcostaApelDuranLombardi2011} for the Brezzi-Douglas-Marini interpolation. We then continue proving the stability estimates on general elements satisfying a regular vertex property and a maximum angle property without regular vertex property, and conclude the interpolation error estimate in the same way as in \cite{AcostaApelDuranLombardi2011}. While in this contribution we only consider the Hilbert space case, all estimates can be proved in the $L^p$ norms, $1\leq p \leq \infty$, as shown in \cite{AcostaApelDuranLombardi2011}. With the same arguments one can prove an analogous estimate to \eqref{eq:introduction_interpolation_error} for prismatic elements and Brezzi-Douglas-Marini interpolation.

In \cite{FarhloulNicaisePaquet2001}, for the lowest order Raviart-Thomas element, the stronger interpolation error estimates on anisotropic triangulations of prismatic domains were shown, by doing an intermediate step of first interpolating to the Raviart-Thomas function space on anisotropic prisms and then interpolating to the simplicial partition of these prisms. This approach is necessary, as one of the tetrahedra from the partition of each prism does not satisfy the regular vertex property, as shown in \autoref{fig:Pentahedron}, so direct interpolation does not yield the desired estimate. A similar approach for the lowest order Brezzi-Douglas-Marini element unfortunately does not work, as the function space on the prismatic triangulation now contains bilinear terms, and as shown in \autoref{rem:weaker_estimate}, interpolation of this type of function on a tetrahedron without a regular vertex property does not satisfy the stronger estimate of the type \eqref{eq:introduction_interpolation_error} for the Brezzi-Douglas-Marini interpolation.

The outline of the article is the following: In \autoref{sec:notation_definitions} we introduce notation, definitions and known results.
In \autoref{sec:stability_estimates} we prove the stability estimates for elements satisfying the maximum angle condition or additionally the regular vertex property.
The proof of the stability of the Raviart-Thomas interpolant in \cite{AcostaApelDuranLombardi2011} is mainly extendable to the Brezzi-Douglas-Marini element, so we follow the reference rather closely.
These results are then employed to prove the final interpolation error estimates in \autoref{sec:interpolation_estimates}.
In the last section, we show, by use of a numerical example taken from \cite{CreuseNicaiseKunert2004}, the application of the Brezzi-Douglas-Marini element to the Stokes equations, using the discontinuous Galerkin discretization from \cite{SchroederLehrenfeldLinkeLube2018}, with the goal of examining the effect of using triangulations with large aspect ratios.

\begin{figure}[htp]
	\centering
	\includegraphics{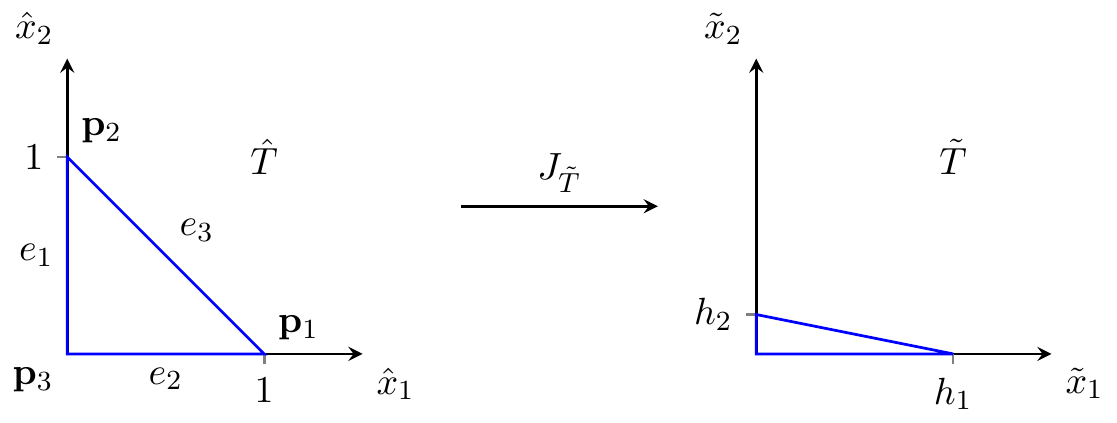}
	\caption{Reference triangle $\hat{T}$ with vertex and edge numbering, transformed triangle}\label{fig:ReferenceTriangle}
	\includegraphics{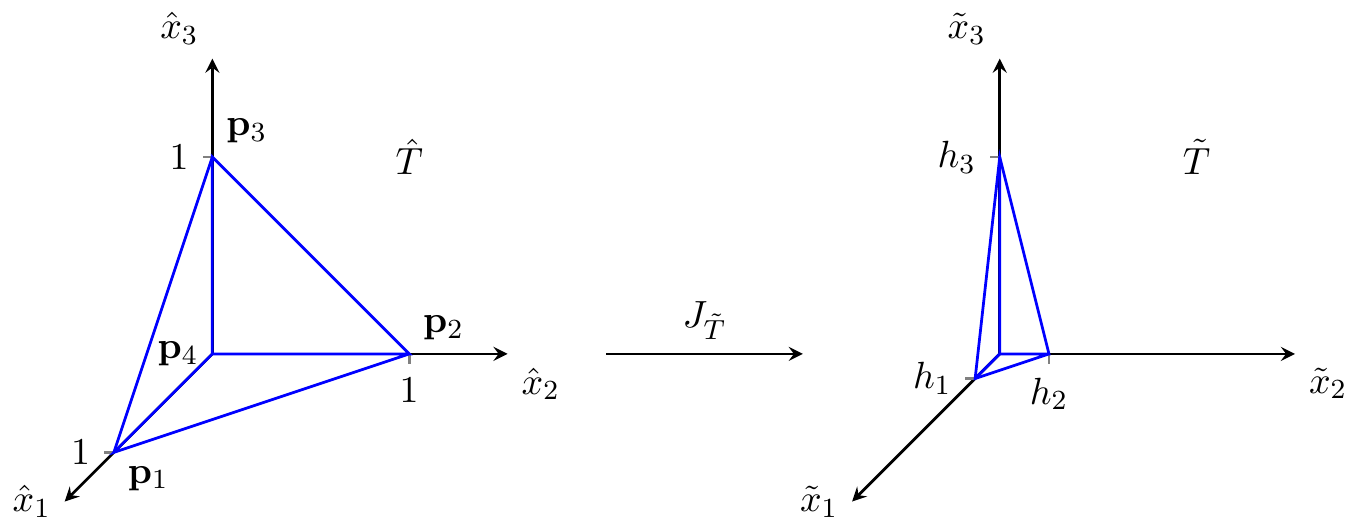}
	\caption{Reference tetrahedron $\hat{T}$ with vertex numbering, transformed tetrahedron of reference family $\mathcal{T}_1$}\label{fig:ReferenceTetrahedron}
	\includegraphics{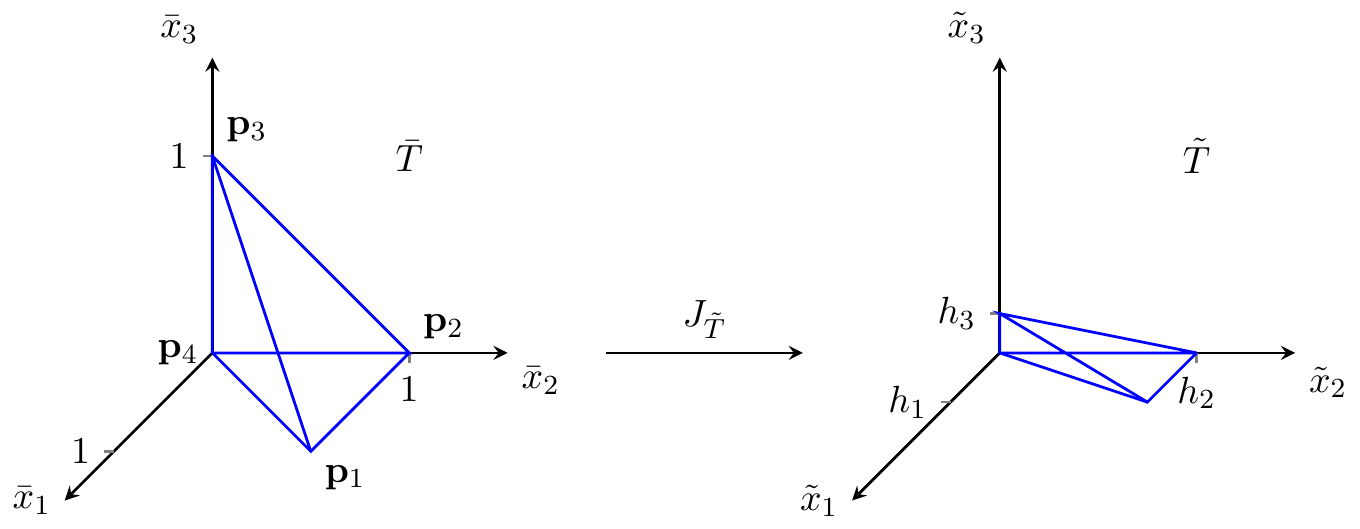}
	\caption{Reference tetrahedron $\bar{T}$ for family of tetrahedra without regular vertex property, transformed tetrahedron of reference family $\mathcal{T}_2$}\label{fig:ReferenceTetrahedron_noRVP}
\end{figure}

\begin{figure}[t]
	\centering
	\includegraphics{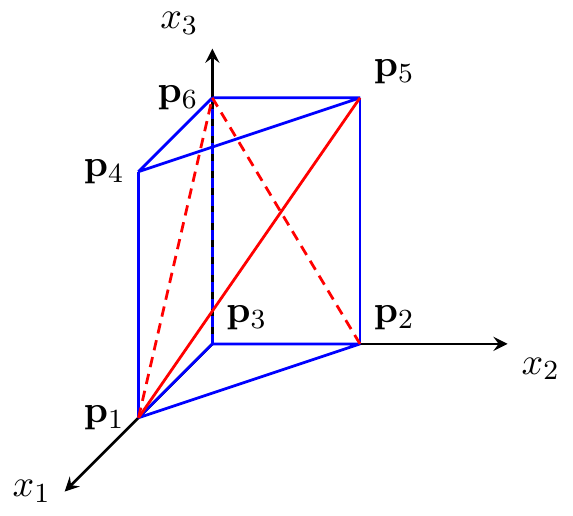}
	\caption{Subdivision of triangular prism $\vec{p_1}\vec{p_2}\vec{p_3}\vec{p_4}\vec{p_5}\vec{p_6}$ in three tetrahedra $\vec{p_1}\vec{p_4}\vec{p_5}\vec{p_6}$, $\vec{p_1}\vec{p_2}\vec{p_3}\vec{p_6}$ and $\vec{p_1}\vec{p_2}\vec{p_5}\vec{p_6}$}\label{fig:Pentahedron}
\end{figure}

\section{Preliminaries}\label{sec:notation_definitions}
\subsection{Notation}
In this text we use the symbol $\lesssim$, meaning less than or equal to up to a positive multiplicative constant. By $D^\alpha$ we denote the derivative due to the multi-index $\alpha$, see e.g. \cite{AdamsFournier2003}, and we use the shorthand $h^\alpha = \prod_{i=1}^d h_i^{\alpha_i}$, where $d\in\{2,3\}$ is the space dimension. The index sets $I_n = \{1,\ldots,n\}$,  $n\in \N$, and for $i\in I_n$ the reduced set ${}_i I_n = I_n \setminus \{i\}$ are used frequently. We choose this notation of the index sets due to their frequent occurrence and in order to keep the content compact.

Vectors, vector valued functions and the spaces of such functions are written in bold letters. The Cartesian unit vectors are denoted by $\vec{e}_i$, $i\in I_d$. 

The symbols $T, \tilde{T}, \hat{T}, \bar{T} \subset \R^d$ denote triangles or tetrahedra, where the hat and bar symbols indicate reference elements, the tilde an element from one of the reference families, see \autoref{subsec:regularity_conditions_elements}. 
Vertices are identified with their position vector $\vec{p}_i$, for the numbering scheme in the reference elements we refer to Figures \ref{fig:ReferenceTriangle}, \ref{fig:ReferenceTetrahedron} and \ref{fig:ReferenceTetrahedron_noRVP}. 
The facets, i.e. edges if $d=2$ and faces if $d=3$, are denoted by $e_i = \operatorname{conv}\{\vec{p}_j:j\in {}_i I_{d+1}\}$, $i\in I_{d+1}$, so that facet $e_i$ is opposite vertex $\vec{p}_i$. The outward facing normal vector on the facet $e_i$ is denoted by $\vec{n}_i$, while for the generic normal vector on $\partial T$ we use $\vec{n}$.

We also employ the standard notation $\norm{\cdot}_{k,D}$ and $\abs{\cdot }_{k,D}$ for the norms and semi-norms of order $k$ of the Sobolev spaces $H^k(D)$ on a domain $D\subset \R^n$.

\subsection{Regularity conditions on elements}\label{subsec:regularity_conditions_elements}
We now give precise definitions of the two already mentioned conditions on the elements, that are essential to the analysis.
\begin{definition}\label{def:maximum_angle_condition}
	An element $T$ satisfies the \emph{maximum angle condition} with a constant $\bar{\phi} < \pi$, written as $\MAC(\bar{\phi})$, if the maximum angle between facets and, for $d=3$, the maximum angle inside the facets are less than or equal to $\bar{\phi}$.
\end{definition}
This condition is a generalization due to \cite{Krizek1992} of the maximum angle condition in triangles, which was first used in \cite{Synge1957}, and is very common when dealing with anisotropic elements, see e.g. \cite{AcostaApelDuranLombardi2011,AcostaDuran1999,Apel1999,DuranLombardi2008}. The next property is equivalent to the maximum angle condition for $d=2$, see \cite{AcostaApelDuranLombardi2011}, while in three dimensions it describes a proper subclass. 
\begin{definition}\label{def:regular_vertex_property}
	An element $T$ satisfies the \emph{regular vertex property} with a constant $\bar{c}$, written as $\RVP(\bar{c})$, if there is a vertex $\vec{p}_k$ of $T$, so that for the matrix $N_k$, made up of the unit column vectors $\vec{l}^k_j=\frac{\vec{p}_j-\vec{p}_k}{\norm{\vec{p}_j-\vec{p}_k}}$ outgoing from vertex $\vec{p}_k$ towards vertex $\vec{p}_j$, $j\in {}_k I_{d+1}$, the inequality 
	\begin{equation*}
		|\det N_k| \geq \bar{c} > 0
	\end{equation*}
	holds. The vertex $\vec{p}_k$ is then called \emph{regular vertex} of the element $T$. Without loss of generality for the rest of the text we assume that the vertices are numbered so that $\vec{p}_{d+1}$ is the regular vertex, so that we can use the more intuitive notation $\vec{l}_i = \vec{l}^{d+1}_i$, $i\in I_d$ and the element size parameters $h_i$, $i\in I_d$, which are defined as the lengths of the edges corresponding to the vectors $\vec{l}_i$. 
\end{definition}

In the text we consider mainly two reference elements, $\hat{T}$ and $\bar{T}$, see the left sides of Figures \ref{fig:ReferenceTriangle}, \ref{fig:ReferenceTetrahedron} and \ref{fig:ReferenceTetrahedron_noRVP}. In addition, we introduce the two reference families $\mathcal{T}_1$ and $\mathcal{T}_2$ of elements with vertices at $0, h_1\vec{e}_1, h_2\vec{e}_2, h_3\vec{e}_3$ and respectively $0, h_1\vec{e}_1+h_2\vec{e}_2,h_2\vec{e}_2, h_3\vec{e}_3$, with arbitrary size parameters $h_i$, $i\in I_d$, which can be seen on the right sides of these figures. 

The next two lemmas are taken from \cite[Theorem 2.2, Theorem 2.3]{AcostaApelDuranLombardi2011} and are stated without proof. They show that these reference families are sufficient to get any tetrahedron satisfying $\MAC(\bar{\phi})$, resp. $\RVP(\bar{c})$, by a reasonable affine transformation $F$, i.e. $F(\tilde{\vec{x}}) = J_T  \tilde{\vec{x}} + \vec{x}_0$, $J_T\in \R^{d\times d}$, where $\norm{J_T}_{\infty}, \norm{J_T^{-1}}_{\infty} \leq C$, with a constant $C$ which only depends on $\bar{\phi}$ resp. $\bar{c}$.
\begin{lemma}\label{lem:MAC}
	Let $T$ be an element satisfying $\MAC(\bar{\phi})$. Then there is an element $\tilde{T}\in \mathcal{T}_1 \cup \mathcal{T}_2$, so that an affine transformation $F(\tilde{\vec{x}}) = J_T \tilde{\vec{x}} + \vec{x}_0$, with $\norm{J_T}_{\infty}, \norm{J_T^{-1}}_{\infty} \leq C$, exists that maps $\tilde{T}$ onto $T$, where $C$ only depends on $\bar{\phi}$.
\end{lemma}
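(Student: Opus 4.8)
The plan is to reduce the statement to a single scale‑invariant estimate on the linear part of $F$ and to concentrate the genuinely three‑dimensional difficulty in one determinant bound. After a translation we may assume a vertex of $T$ sits at $\vec{x}_0$, so that $F$ is fixed by its linear part through $J_T(\tilde{\vec{q}}_i-\tilde{\vec{q}}_0)=\vec{q}_i-\vec{q}_0$, where $\vec{q}_i$ and $\tilde{\vec{q}}_i$ are the vertices of $T$ and of the chosen reference element $\tilde T$. Since the size parameters of $\mathcal{T}_1,\mathcal{T}_2$ are free, I would set each $h_i$ equal to the corresponding edge length of $T$; the columns of $J_T$ then become unit vectors and $\norm{J_T}_\infty\le d$ is immediate. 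Writing $J_T^{-1}=\operatorname{adj}(J_T)/\det J_T$ and observing that the cofactors are minors of a matrix with unit columns, hence bounded by a constant depending only on $d$, the whole assertion collapses to producing a vertex labelling for which
\[
 \abs{\det J_T}\ge c(\bar\phi)>0 .
\]
An orthogonal reorientation of $T$ changes $J_T$ only by an orthogonal factor and is harmless for both norms, so I may place $T$ conveniently for the geometric analysis.

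A direct computation reveals the meaning of the two reference families. For $\tilde T\in\mathcal{T}_1$ the columns of $J_T$ are the three unit edge vectors $\vec{l}_j$ emanating from the image of the origin, so $\abs{\det J_T}=\abs{\det N_k}$ is precisely the regular‑vertex determinant of that vertex. For $\tilde T\in\mathcal{T}_2$ a short computation shows instead that, after relabelling, the columns of $J_T$ are the unit vectors of three edges forming a \emph{path} through all four vertices of $T$; using the column operation $(\vec{q}_1-\vec{q}_2)+(\vec{q}_2-\vec{q}_0)=\vec{q}_1-\vec{q}_0$ one checks that in both cases $\abs{\det J_T}=6\operatorname{vol}(T)/(h_1h_2h_3)$, only with a different triple of edge lengths in the denominator. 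This quantity factorises as a product $\sin\omega\,\sin\beta$ of the sine of a face angle $\omega$ and the sine of an elevation angle $\beta$ of the remaining edge over the plane of the other two, so $\abs{\det J_T}$ is bounded below exactly when the angles entering it are bounded away from both $0$ and $\pi$.

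I would next record the planar tool. A triangle with maximal angle $\le\bar\phi$ has largest angle $\omega\in[\pi/3,\bar\phi]$, hence $\sin\omega\ge\min\{\sin(\pi/3),\sin\bar\phi\}=:c(\bar\phi)>0$; taking the largest‑angle vertex as origin realises the triangle as an image of a $\mathcal{T}_1$‑type reference triangle with $\abs{\det J_T}=\sin\omega\ge c(\bar\phi)$. This settles $d=2$ and supplies the normalisation of a base face in the tetrahedral case.

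The hard part is $d=3$. I would fix a face as base, normalise it with the planar tool, and study the opposite apex. The maximum angle condition bounds every face angle and every dihedral angle by $\bar\phi<\pi$, that is, it gives purely one‑sided (upper) control, whereas the target determinant needs the relevant angles to stay away from $0$ as well — and $\MAC(\bar\phi)$ explicitly permits angles near $0$. The core of the argument is therefore a case distinction according to the position of the foot of the apex relative to the normalised base triangle: when some vertex sees three uniformly independent edges one is in the regular‑vertex situation and chooses $\mathcal{T}_1$; otherwise the apex lies over a thin part of the base, no single vertex works, and one must pass to the path configuration $\mathcal{T}_2$, whose determinant combines angles measured at different vertices and can thereby be kept bounded below. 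Proving that the dihedral‑angle bounds of $\MAC(\bar\phi)$ genuinely prevent all vertex stars and all paths from degenerating at once — i.e. converting the one‑sided angle control into the two‑sided volume bound $6\operatorname{vol}(T)/(h_1h_2h_3)\ge c(\bar\phi)$ — is the main obstacle, and it is exactly here that both families are indispensable. Once this lower bound is in hand, assembling $F$ from the chosen labelling and edge‑length size parameters yields $\norm{J_T}_\infty,\norm{J_T^{-1}}_\infty\le C(\bar\phi)$ and finishes the proof.
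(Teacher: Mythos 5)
Your algebraic reduction is sound: taking the size parameters to be edge lengths of $T$ makes the columns of $J_T$ unit vectors, which gives $\norm{J_T}_{\infty}\leq d$ at once and, via the adjugate, $\norm{J_T^{-1}}_{\infty}\lesssim \abs{\det J_T}^{-1}$; your identification of the determinant in the two cases is also correct (for $\mathcal{T}_1$ it is the regular-vertex determinant at the chosen vertex, for $\mathcal{T}_2$ it is the determinant of the unit vectors along a path of three edges through all four vertices, both equal to $6\operatorname{vol}(T)/(h_1h_2h_3)$ with the respective edge lengths), and the planar case is complete. But for $d=3$ the argument stops exactly where the content of the lemma lies. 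You yourself call the conversion of the one-sided angle bounds of $\MAC(\bar\phi)$ into the two-sided bound $\abs{\det J_T}\geq c(\bar\phi)$ ``the main obstacle,'' and then you do not overcome it: there is no rule selecting which vertex or which edge path to use as a function of the geometry of $T$, no proof that whenever every vertex star degenerates some path configuration survives, and no quantitative dependence of $c(\bar\phi)$ on $\bar\phi$. Since everything preceding this point is routine linear algebra, the unproved claim is not a small gap in an otherwise complete proof --- it \emph{is} the lemma, restated in determinant form. As it stands, the proposal is a correct reduction plus a declaration of what remains to be shown.

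For context, the paper does not prove this statement either: it is quoted, explicitly ``without proof,'' from Theorems 2.2 and 2.3 of \cite{AcostaApelDuranLombardi2011}, where the three-dimensional case analysis you postpone is actually carried out. So a blind proof attempt here must either reproduce that geometric argument (a genuine case distinction on the shape of $T$ under $\MAC(\bar\phi)$, showing in each case that a suitable vertex or edge path yields a uniformly non-degenerate configuration) or cite it; your text does neither, so the attempt cannot be accepted as a proof of the lemma.
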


\begin{lemma}\label{lem:RVP}
	Let $T$ be an element satisfying $\RVP(\bar{c})$. Then there is an element $\tilde{T}\in \mathcal{T}_1$, so that an affine transformation $F(\tilde{\vec{x}}) = J_T \tilde{\vec{x}} + \vec{x}_0$, with $\norm{J_T}_{\infty}, \norm{J_T^{-1}}_{\infty} \leq C$, exists that maps $\tilde{T}$ onto $T$, where $C$ only depends on $\bar{c}$. Additionally, if the edges of $T$ sharing the regular vertex $\vec{p}_{d+1}$ have lengths $h_i$, $i\in I_d$, then we can take a $\tilde{T} \in \mathcal{T}_1$ with lengths $h_i$ in direction $\tilde{x}_i$.
\end{lemma}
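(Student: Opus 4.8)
The plan is to construct the affine map explicitly from the geometric data at the regular vertex, so that $J_T$ is literally the matrix of edge directions and the determinant bound from $\RVP(\bar c)$ controls its inverse. By the regular vertex property there is a vertex, which by the numbering convention of \autoref{def:regular_vertex_property} we call $\vec{p}_{d+1}$, such that the unit edge vectors $\vec{l}_i = \frac{\vec{p}_i-\vec{p}_{d+1}}{\norm{\vec{p}_i-\vec{p}_{d+1}}}$, $i\in I_d$, assembled as columns into $N_{d+1}$, satisfy $\abs{\det N_{d+1}}\geq \bar c$. Writing $h_i = \norm{\vec{p}_i-\vec{p}_{d+1}}$ for the lengths of these edges gives $\vec{p}_i-\vec{p}_{d+1} = h_i\vec{l}_i$.

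First I would pick $\tilde T\in\mathcal{T}_1$ to be the element with vertices $0, h_1\vec{e}_1,\ldots,h_d\vec{e}_d$ and exactly these size parameters; this already settles the additional claim, since the edge of $\tilde T$ in direction $\tilde x_i$ has length $h_i$ by construction. Then I would set $\vec{x}_0 = \vec{p}_{d+1}$ and $J_T = N_{d+1}$, so that $F(\tilde{\vec{x}}) = N_{d+1}\tilde{\vec{x}} + \vec{p}_{d+1}$. Since $F(0) = \vec{p}_{d+1}$ and $F(h_i\vec{e}_i) = h_i N_{d+1}\vec{e}_i + \vec{p}_{d+1} = h_i\vec{l}_i + \vec{p}_{d+1} = \vec{p}_i$, the map $F$ sends the vertices of $\tilde T$ bijectively onto those of $T$, and as both are simplices, i.e. convex hulls of their vertices, we conclude $F(\tilde T) = T$.

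It remains to bound $\norm{J_T}_\infty$ and $\norm{J_T^{-1}}_\infty$ by a constant depending only on $\bar c$ (and the fixed dimension $d$). The bound $\norm{J_T}_\infty \leq d$ is immediate and $\bar c$-independent: the columns of $J_T = N_{d+1}$ are Euclidean unit vectors, so every entry is at most $1$ in modulus and each absolute row sum is at most $d$. For the inverse I would invoke Cramer's rule, $J_T^{-1} = (\det N_{d+1})^{-1}\operatorname{adj}(N_{d+1})$: each entry of the adjugate is a $(d-1)\times(d-1)$ minor of a matrix with entries bounded by $1$, hence bounded by a constant $M(d)$, while $\abs{\det N_{d+1}}\geq \bar c$ by hypothesis. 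This yields $\norm{J_T^{-1}}_\infty \leq C(d)/\bar c$, as required.

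The argument is essentially elementary linear algebra, and the regular vertex property enters exactly once, through the lower bound $\abs{\det N_{d+1}}\geq\bar c$, precisely to control $\norm{J_T^{-1}}_\infty$; this single step is the only one requiring care. It is also where the contrast with the maximum angle case becomes visible: under $\RVP(\bar c)$ the single family $\mathcal{T}_1$ together with the direct choice $J_T = N_{d+1}$ always suffices, whereas under $\MAC(\bar{\phi})$ a well-conditioned vertex need not exist and one is forced onto the larger set $\mathcal{T}_1\cup\mathcal{T}_2$, which is why \autoref{lem:MAC} admits no such clean diagonalization.
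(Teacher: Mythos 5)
Your proof is correct and is essentially the canonical argument: the paper states this lemma without proof, citing \cite{AcostaApelDuranLombardi2011}, and the construction there is the same direct one you give, namely taking $J_T$ to be the matrix $N_{d+1}$ of unit edge vectors at the regular vertex and $\tilde{T}\in\mathcal{T}_1$ with the matching edge lengths, so that the $\RVP(\bar{c})$ bound $\abs{\det N_{d+1}}\geq \bar{c}$ controls $\norm{J_T^{-1}}_{\infty}$ via Cramer's rule. Nothing is missing; the additional dependence of your constants on $d$ is harmless since $d\in\{2,3\}$ is fixed throughout the paper.
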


A drawback of dealing with tetrahedra satisfying the regular vertex property is, that they can not be used to fill arbitrary volumes. For example, consider an anisotropic pentahedron, i.e. a triangular prism. This volume can be subdivided into three tetrahedra, see \autoref{fig:Pentahedron}, of which only two satisfy the regular vertex property, while the third ($\vec{p}_1\vec{p}_2\vec{p}_5\vec{p}_6$ in \autoref{fig:Pentahedron}) does not and is of the type pictured in \autoref{fig:ReferenceTetrahedron_noRVP}.

\subsection{Interpolation operator}
The Brezzi-Douglas-Marini finite element was introduced for triangles in \cite{BrezziDouglasMarini1985} and generalized by N\'{e}d\'{e}lec in \cite{Nedelec1986}. On an arbitrary element $T$, the Brezzi-Douglas-Marini function space of order $k$ is $\vec{P}_k(T)=(P_k(T))^d$, the full space of polynomials of order $k$. For the definition of the interpolation operator, we need to introduce the spaces, see \cite{Nedelec1986,BrezziDouglasMarini1985},
\begin{align*}
	\vec{Q}_k(T) &= \{\vec{z}\in \vec{P}_k(T): \nabla\cdot \vec{z} = 0, \vec{z} \cdot \vec{n}|_{\partial T} =0\}, \\
	\vec{N}_k(T) &= \vec{P}_{k-1}(T) \oplus \vec{S}_{k}(T), \\
	\vec{S}_k(T) &= \{\vec{p}\in \vec{P}_k(T): \vec{p}\cdot\vec{x} \equiv 0\}.
\end{align*}
In the literature, there are two ways to define the interpolation operator $\bdmi$ of the Brezzi-Douglas-Marini element of order $k\geq 1$. The first one, from \cite{BrezziDouglasMarini1985,BrezziFortin1991} defines the operator by the relations
\begin{align}
	\int_{e_i} (\bdmi {\vec{v}})\cdot \vec{n}_i z &= \int_{e_i} {\vec{v}}\cdot\vec{n}_i z, && \forall z \in P_k(e_i), \quad i\in I_{d+1}, \label{eq:BDM_dofs_old_1}\\
	\int_T (\bdmi \vec{v}) \cdot \nabla z &= \int_T \vec{v} \cdot \nabla z, && \forall z \in P_{k-1}(T), \label{eq:BDM_dofs_old_2} \\
	\int_T (\bdmi \vec{v}) \cdot \vec{z} &= \int_T \vec{v} \cdot \vec{z}, && \forall \vec{z} \in \vec{Q}_k(T), \label{eq:BDM_dofs_old_3}
\end{align}
while the second, see \cite{Nedelec1986,BoffiBrezziFortin2013}, uses the relations
\begin{align}
	\int_{e_i} (\bdmi {\vec{v}})\cdot \vec{n}_i z &= \int_{e_i} {\vec{v}}\cdot\vec{n}_i z, && \forall z \in P_k(e_i), \quad i\in I_{d+1}, \label{eq:BDM_dofs_1}\\
	\int_T (\bdmi {\vec{v}})\cdot \vec{z} &= \int_T \vec{v} \cdot \vec{z}, && \forall \vec{z} \in \vec{N}_{k-1}(T), \text{ if } k \geq 2. \label{eq:BDM_dofs_2}
\end{align}
For the purpose of this paper we use the latter definition, and we show in \autoref{rem:diff_dofs}, that there is a significant difference in using one or the other definition for the proof of our stability estimates.

We indicate the element, the operator interpolates to by the hat, bar and tilde symbol, e.g. $\hbdmi$ corresponds to $\hat{T}$.

\section{Stability estimates}\label{sec:stability_estimates}
\subsection{Stability with regular vertex property} \label{subsec:refMAC+RVP}
In order to get a stability estimate on the reference element $\hat{T}$, for $d\in\{2,3\}$, we need a technical lemma first, which is the analog to \cite[Lemma 3.2]{AcostaApelDuranLombardi2011}. We formulate the lemma for three dimensions, but the statement holds for $d=2$ as well.
\begin{lemma}\label{lem:bdm_zero}
	On the reference element $\hat{T}$ let $\hat{f}_i \in {L}^2(e_i)$, $i\in I_d$, and
	\begin{align*}
		\hvec{u}(\hvec{x}) = (\hat{f}_1(\hat{x}_2,\hat{x}_3), 0, 0)^T, \quad \hvec{v}(\hvec{x}) = (0,\hat{f}_2(\hat{x}_1,\hat{x}_3),0)^T, \quad \hvec{w}(\hvec{x}) = (0,0,\hat{f}_3(\hat{x}_1,\hat{x}_2))^T.
	\end{align*}
	Then there are functions $\hat{q}_i\in P_k(e_i)$, $i\in I_d$, so that
	\begin{align*}
		\hbdmi\hvec{u} = (\hat{q}_1(\hat{x}_2,\hat{x}_3), 0, 0)^T, \; \hbdmi\hvec{v} = (0,\hat{q}_2(\hat{x}_1,\hat{x}_3),0)^T, \; \hbdmi\hvec{w} = (0,0,\hat{q}_3(\hat{x}_1,\hat{x}_2))^T.
	\end{align*}
\end{lemma}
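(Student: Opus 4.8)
The plan is to sidestep any direct computation of the interpolant and instead lean on the unisolvence of the degrees of freedom \eqref{eq:BDM_dofs_1}--\eqref{eq:BDM_dofs_2}: I will exhibit an explicit field of the asserted form and verify that it reproduces \emph{every} degree of freedom of $\hvec{u}$, whereupon uniqueness forces it to equal $\hbdmi\hvec{u}$. It is enough to treat $\hvec{u}$, because $\hvec{v}$ and $\hvec{w}$ arise from it by permuting the coordinate indices, and the reference element $\hat{T}=\{\hvec{x}:\hat{x}_i\ge 0,\ \hat{x}_1+\hat{x}_2+\hat{x}_3\le 1\}$, together with its degrees of freedom, is invariant under those permutations. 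I record that the three coordinate faces are $e_1=\{\hat{x}_1=0\}$, $e_2=\{\hat{x}_2=0\}$, $e_3=\{\hat{x}_3=0\}$ with outward normals $-\vec{e}_1,-\vec{e}_2,-\vec{e}_3$, while the slanted face is $e_4=\{\hat{x}_1+\hat{x}_2+\hat{x}_3=1\}$ with normal $\tfrac{1}{\sqrt{3}}(1,1,1)^T$.

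As candidate I take $\hvec{p}=(\hat{q}_1(\hat{x}_2,\hat{x}_3),0,0)^T$, where $\hat{q}_1\in P_k(e_1)$ is the $L^2(e_1)$-orthogonal projection of $\hat{f}_1$ onto $P_k(e_1)$. Since $e_1$ lies in the plane $\{\hat{x}_1=0\}$, this is simply the projection onto polynomials of degree $k$ in $(\hat{x}_2,\hat{x}_3)$ over the unit triangle $\triangle$. I then check the facet conditions \eqref{eq:BDM_dofs_1}. On $e_2$ and $e_3$ both $\hvec{u}$ and $\hvec{p}$ have vanishing normal component, so those hold trivially; on $e_1$ the condition is exactly the defining projection identity for $\hat{q}_1$. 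The only genuinely nontrivial facet is $e_4$, where $\hvec{u}\cdot\vec{n}_4=\tfrac{1}{\sqrt{3}}\hat{f}_1$ and $\hvec{p}\cdot\vec{n}_4=\tfrac{1}{\sqrt{3}}\hat{q}_1$. Eliminating $\hat{x}_1=1-\hat{x}_2-\hat{x}_3$ identifies the trace space $P_k(e_4)$ with $P_k(\triangle)$ in the variables $(\hat{x}_2,\hat{x}_3)$, and the surface measure on $e_4$ is the constant $\sqrt{3}$ times $d\hat{x}_2\,d\hat{x}_3$; since $\hat{f}_1$ and $\hat{q}_1$ depend on $(\hat{x}_2,\hat{x}_3)$ only, the $e_4$ condition collapses to the very same planar projection identity that defines $\hat{q}_1$ and is therefore automatic.

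For $k\ge 2$ it remains to match the interior moments \eqref{eq:BDM_dofs_2}. For $\vec{z}\in\vec{N}_{k-1}(\hat{T})\subset\vec{P}_{k-1}$ with first component $z_1$, the difference of the two sides is $\int_{\hat{T}}(\hat{q}_1-\hat{f}_1)\,z_1$. As $\hat{q}_1-\hat{f}_1$ depends only on $(\hat{x}_2,\hat{x}_3)$, integrating out $\hat{x}_1$ over $[0,1-\hat{x}_2-\hat{x}_3]$ recasts this as $\int_{\triangle}(\hat{q}_1-\hat{f}_1)\,Z\,d\hat{x}_2\,d\hat{x}_3$ with $Z(\hat{x}_2,\hat{x}_3)=\int_0^{1-\hat{x}_2-\hat{x}_3}z_1\,d\hat{x}_1$. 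The decisive degree count is that $z_1\in P_{k-1}$ forces $Z\in P_k(\triangle)$, so the integral vanishes by the projection property of $\hat{q}_1$. Thus $\hvec{p}$ reproduces all degrees of freedom and unisolvence yields $\hbdmi\hvec{u}=(\hat{q}_1(\hat{x}_2,\hat{x}_3),0,0)^T$; the statements for $\hvec{v},\hvec{w}$ follow by the coordinate symmetry, and the case $d=2$ is identical after discarding the third coordinate.

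The step I expect to be the crux is precisely this pair of conditions on $e_4$ and in the interior: at first glance they look like extra, independent constraints that an ansatz of the special shape $(\hat{q}_1,0,0)^T$ has no reason to satisfy. The one real idea is that the dependence of $\hat{f}_1$ on the two tangential variables alone makes all of them collapse onto the single two-dimensional projection defining $\hat{q}_1$, and for the interior moments this rests on the antiderivative in $\hat{x}_1$ lifting the degree from $k-1$ to exactly $k$. That the interior test functions sit in $\vec{P}_{k-1}$ --- a feature of the N\'{e}d\'{e}lec-type definition \eqref{eq:BDM_dofs_1}--\eqref{eq:BDM_dofs_2} rather than \eqref{eq:BDM_dofs_old_1}--\eqref{eq:BDM_dofs_old_3} --- is what keeps $Z$ inside $P_k(\triangle)$, which I suspect is the distinction highlighted in \autoref{rem:diff_dofs}.
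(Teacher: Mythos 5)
Your proposal is correct and follows essentially the same strategy as the paper's proof: define $\hat{q}_1$ as the $L^2(e_1)$-projection of $\hat{f}_1$ onto $P_k(e_1)$, verify that the candidate $(\hat{q}_1,0,0)^T$ reproduces every degree of freedom in \eqref{eq:BDM_dofs_1}--\eqref{eq:BDM_dofs_2} (with the $e_4$ condition collapsing to the same planar projection identity), and conclude by unisolvence. The one place you diverge is the interior moments: the paper chooses a three-dimensional antiderivative $Z$ with $\pdv{Z}{\hat{x}_1}=z_1$ and integrates by parts, so that the resulting boundary terms must be matched by re-using the already established facet identities on $e_1$ and $e_4$, whereas you integrate out $\hat{x}_1$ by Fubini and observe that $Z(\hat{x}_2,\hat{x}_3)=\int_0^{1-\hat{x}_2-\hat{x}_3}z_1\,\dd\hat{x}_1$ lies in $P_k(e_1)$ --- the same degree-raising fact, but deployed so that the interior check rests only on the defining projection on $e_1$ and is logically independent of the facet checks, a marginally cleaner bookkeeping of what is really the identical computation. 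You also correctly identify the crux that makes either variant work, namely that the N\'{e}d\'{e}lec-type interior test space $\vec{N}_{k-1}\subset\vec{P}_{k-1}$ keeps the antiderivative at degree $k$, which is precisely the distinction between the two sets of degrees of freedom highlighted in \autoref{rem:diff_dofs}.
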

\begin{proof}
	We prove the statement by showing that the functions $\hat{q}_i$ are uniquely defined by the relations of the interpolation operator from \eqref{eq:BDM_dofs_1}, \eqref{eq:BDM_dofs_2}. We show this in detail for the first case, the results for $\hbdmi \hvec{v}$ and $\hbdmi \hvec{w}$ follow analogously.
	
	The normal vectors $\vec{n}_j$ of the facets $e_j$, $j\in I_{d+1}$ of the reference element $\hat{T}$, see \autoref{fig:ReferenceTetrahedron}, are $\vec{n}_1 = (-1,0,0)^T$, $\vec{n}_2 = (0,-1,0)^T$, $\vec{n}_3 = (0,0,-1)^T$ and $\vec{n}_4 = \frac{1}{\sqrt{3}}(1,1,1)^T$. So the first relation from \eqref{eq:BDM_dofs_1} reduces to 
	\begin{equation}\label{eq:bdm_zero}
		\int_{e_1} \hat{f}_1 z = \int_{e_1} \hat{q}_1 z, \quad \forall z \in P_k(e_1),
	\end{equation}
	which already defines $\hat{q}_1$ uniquely. In the rest of the proof, we show that the remaining relations are compatible.
	For $j=2,3$ we get trivial equalities from \eqref{eq:BDM_dofs_1}. For $j=4$ we get 
	\begin{align*}
		\int_{e_4} (\hvec{u} - \hbdmi \hvec{u})\cdot \vec{n}_4 z &= \int_0^1 \int_0^{1-\hat{x}_2} (\hat{f}_1(\hat{x}_2,\hat{x}_3) - \hat{q}_1(\hat{x}_2,\hat{x}_3))z(1-\hat{x}_2-\hat{x}_3,\hat{x}_2,\hat{x}_3) \dd \hat{x}_3 \dd \hat{x}_2 \\
		&= \int_{e_1} (\hat{f}_1(\hat{x}_2,\hat{x}_3) - \hat{q}_1(\hat{x}_2,\hat{x}_3))z(1-\hat{x}_2-\hat{x}_3,\hat{x}_2,\hat{x}_3) \dd \hat{x}_3 \dd \hat{x}_2 = 0
	\end{align*}
	where the last equality holds due to \eqref{eq:bdm_zero} since $z(1-\hat{x}_2-\hat{x}_3,\hat{x}_2,\hat{x}_3)$ is a polynomial of degree $k$ in the variables $\hat{x}_2,\hat{x}_3$. 
	For the internal degrees of freedom, take an arbitrary $\vec{z} \in \vec{N}_{k-1}(\hat{T})\subset \vec{P}_{k-1}(\hat{T})$ and let $Z\in P_k(\hat{T})$ be so that $\pdv{Z}{\hat{x}_1} = z_1$. Then with $\vec{n}= (n_1,n_2,n_3)^T$ being the outward normal vector on $\partial \hat{T}$, using integration by parts and noting that $\hat{f}_1$ and $\hat{q}_1$ do not depend on $\hat{x}_1$, we can calculate
	\begin{align*}
		\int_{\hat{T}} \hvec{u} \cdot \vec{z} &= \int_{\hat{T}} \hat{f}_1 z_1 = \int_{\hat{T}} \hat{f}_1 \pdv{Z}{\hat{x}_1} = \int_{\partial \hat{T}} \hat{f}_1 Z n_1 - \int_{\hat{T}} \pdv{\hat{f}_1}{\hat{x}_1}Z \\
		&= \int_{\partial\hat{T}} \hat{q}_1 Z n_1 = \int_{\hat{T}} \hat{q}_1 \pdv{Z}{\hat{x}_1} = \int_{\hat{T}} \hat{q}_1 z_1,
	\end{align*}
	which concludes the proof.
\end{proof}

\begin{remark}\label{rem:diff_dofs}
The two different definitions of the Brezzi-Douglas-Marini degrees of freedom, see \eqref{eq:BDM_dofs_old_1}--\eqref{eq:BDM_dofs_old_3} and \eqref{eq:BDM_dofs_1}--\eqref{eq:BDM_dofs_2}, both describe polynomial approximations of order $k\in\N$ of $\vec{H}(\operatorname{div})$, but the associated interpolation operators differ significantly. As mentioned before, we use the definitions from \cite{Nedelec1986} in order to prove the Lemmas \ref{lem:bdm_zero} and \ref{lem:bdm_zero_noRVP}, which are then used to get the subsequent stability estimates. The statements of these Lemmas do not hold, were the interpolation operator defined by the degrees of freedom from \cite{BrezziDouglasMarini1985}, as we will show in a straightforward example. 

The difference only appears for $k\geq 2$, since for $k=1$ the degrees of freedom are the same. So consider the case $k=2$, and the function $\hvec{v} = (0,\hat{x}_1^3)^T$ on the reference triangle $\hat{T}$, see \autoref{fig:ReferenceTriangle}. Then using the interpolation operator defined by \eqref{eq:BDM_dofs_1}--\eqref{eq:BDM_dofs_2}, the interpolated function is 
\begin{equation*}
	\hbdmiorder{2}\hvec{v} = \begin{pmatrix} 0 \\ \frac{1}{20} - \frac{3}{5} \hat{x}_1 + \frac{3}{2} \hat{x}_1^2 \end{pmatrix},
\end{equation*}
which as expected has the properties described in \autoref{lem:bdm_zero}.
Calculating the interpolant with the operator defined by \eqref{eq:BDM_dofs_old_1}--\eqref{eq:BDM_dofs_old_3}, we get the function
\begin{equation*}
	\hat{I}^{\overline{BDM}}_2\hvec{v} = \begin{pmatrix} \frac{3}{140}\hat{x}_1(1-\hat{x}_1-2\hat{x}_2) \\ \frac{1}{20} - \frac{3}{5} \hat{x}_1 + \frac{3}{2} \hat{x}_1^2 - \frac{3}{140} \hat{x}_2(1-2\hat{x}_1-\hat{x}_2) \end{pmatrix},
\end{equation*}
which clearly does not have the desired property.
\end{remark} \pagebreak[3]

We can now show the stability estimate on the reference element $\hat{T}$.
\begin{lemma}\label{lem:Estimate_reference_element}
	Let $\hvec{u} \in \vec{H}^1(\hat{T})$, then the estimates
	\begin{equation} \label{eq:estimate_ref_elem}
		\norm{(\hbdmi\hvec{u})_i}_{0,\hat{T}} \lesssim \norm{{\hat{u}_i}}_{1,\hat{T}} + \norm{\divhat \hvec{u}}_{0,\hat{T}}, \quad i\in I_d,
	\end{equation}
	hold.
\end{lemma}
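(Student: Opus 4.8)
The plan is to follow the strategy of \cite[Lemma 3.3]{AcostaApelDuranLombardi2011} for the Raviart--Thomas element, using \autoref{lem:bdm_zero} as the structural tool. By the invariance of $\hat T$ under permutations of the coordinate axes it suffices to treat the case $i=1$; the estimates for the remaining components then follow verbatim. Since $\hbdmi\hvec u$ lives in the fixed finite-dimensional space $\vec P_k(\hat T)$, all norms on it are equivalent, so the task reduces to controlling the degrees of freedom \eqref{eq:BDM_dofs_1}--\eqref{eq:BDM_dofs_2} that determine $(\hbdmi\hvec u)_1$, and to showing that each of them is bounded by $\norm{\hat u_1}_{1,\hat T}+\norm{\divhat\hvec u}_{0,\hat T}$ rather than by the full norm $\norm{\hvec u}_{1,\hat T}$.

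First I would split off the part of $\hvec u$ that is seen cleanly by $\hat u_1$. Let $\hat f_1(\hat x_2,\hat x_3)$ be the trace of $\hat u_1$ on the face $e_1\subset\{\hat x_1=0\}$, extended constantly in $\hat x_1$, and write $\hvec u=(\hat f_1,0,0)^T+\hvec r$. By \autoref{lem:bdm_zero} and linearity, $(\hbdmi(\hat f_1,0,0)^T)_1$ equals the $L^2(e_1)$-projection of $\hat f_1$ onto $P_k(e_1)$, whose $L^2(\hat T)$-norm is bounded by $\norm{\hat f_1}_{0,e_1}\lesssim\norm{\hat u_1}_{1,\hat T}$ via the trace theorem. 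The remainder $\hvec r$ has divergence $\divhat\hvec r=\divhat\hvec u$ (because $\hat f_1$ is independent of $\hat x_1$), while its first component $\hat r_1$ vanishes on $e_1$ and satisfies $\norm{\hat r_1}_{0,\hat T}\lesssim\norm{\hat u_1}_{1,\hat T}$ by a Poincaré-type bound; hence the face degree of freedom on $e_1$, which reads $-\int_{e_1}\hat r_1 z$, again contributes only $\norm{\hat u_1}_{1,\hat T}$.

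The decisive point is then to control $\hvec r$ through the remaining degrees of freedom, namely the face $e_4$ with $\vec n_4=\tfrac{1}{\sqrt3}(1,1,1)^T$ and the interior degrees of freedom against $\vec N_{k-1}(\hat T)$, since these mix all three components and a priori involve $\hat u_2,\hat u_3$. The key observation is $\nabla P_{k-1}(\hat T)\subset\vec P_{k-2}(\hat T)\subset\vec N_{k-1}(\hat T)$: testing \eqref{eq:BDM_dofs_2} with $\vec z=\nabla Z$, $Z\in P_{k-1}(\hat T)$, and integrating by parts exactly as in the proof of \autoref{lem:bdm_zero}, the contributions of $\hat u_2,\hat u_3$ collapse into normal traces on $\partial\hat T$ (already matched by the face degrees of freedom) and the volume term $\int_{\hat T}(\divhat\hvec u)Z$. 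This is the commuting relation $\divhat(\hbdmi\hvec u)=\Pi_{k-1}\divhat\hvec u$, and it shows that the dependence of $(\hbdmi\hvec u)_1$ on the other components enters only through $\divhat\hvec u$.

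I expect the main obstacle to be the genuinely coupling part of the interior degrees of freedom, namely those testing against the N\'ed\'elec space $\vec S_{k-1}(\hat T)$ that are not gradients: one must verify that their action on $\hvec r$ feeds into $(\hbdmi\hvec u)_1$ only in combination with $\hat u_1$ and $\divhat\hvec u$, and not through the tangential parts of $\hat u_2,\hat u_3$. This is precisely where the N\'ed\'elec choice of degrees of freedom \eqref{eq:BDM_dofs_1}--\eqref{eq:BDM_dofs_2}, rather than \eqref{eq:BDM_dofs_old_1}--\eqref{eq:BDM_dofs_old_3}, is essential, as highlighted in \autoref{rem:diff_dofs}; the integration-by-parts identity driving \autoref{lem:bdm_zero} is the mechanism producing the required cancellation. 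Once this decoupling is established, assembling the bounded contributions through the norm equivalence on $\vec P_k(\hat T)$ yields \eqref{eq:estimate_ref_elem}.
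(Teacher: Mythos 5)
Your overall strategy (reduce to the degrees of freedom, use \autoref{lem:bdm_zero}, exploit the N\'ed\'elec choice of DOFs) points in the right direction, but the proposal has a genuine gap, and your decomposition does not set up the cancellation the argument needs. You subtract the trace extension of the \emph{first} component, $(\hat f_1,0,0)^T$, which only normalizes the $e_1$ degree of freedom --- the one DOF that was never problematic, since it involves only $\hat u_1$. The dangerous DOFs are the ones you never address: the face DOFs on $e_2$ and $e_3$, namely $\int_{e_2}\hat u_2 z$ and $\int_{e_3}\hat u_3 z$, and the interior DOFs $\int_{\hat T}\hvec{u}\cdot\vec{z}$, $\vec{z}\in\vec{N}_{k-1}(\hat T)$, all of which see $\hat u_2,\hat u_3$ directly. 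These cannot be bounded by the right-hand side of \eqref{eq:estimate_ref_elem}: for $\hvec{u}=(0,\hat u_2(\hat x_1,\hat x_3),0)^T$ one has $\norm{\hat u_1}_{1,\hat T}+\norm{\divhat\hvec{u}}_{0,\hat T}=0$ while $\int_{e_2}\hat u_2 z$ is arbitrary, so a ``bound every DOF and use norm equivalence'' argument is impossible in principle; one must show these DOFs do not feed into the first component of the interpolant at all. Your commuting-diagram observation (testing with $\vec{z}=\nabla Z$) is correct but controls only $\divhat(\hbdmi\hvec{u})$, not $(\hbdmi\hvec{u})_1$, and you explicitly defer exactly the non-gradient part of the interior DOFs (``one must verify\dots'', ``once this decoupling is established\dots''). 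That deferred step is the heart of the proof, not a routine verification.

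The paper resolves it with a different decomposition. It first subtracts $\hvec{u}_*=(0,\hat u_2(\hat x_1,0,\hat x_3),\hat u_3(\hat x_1,\hat x_2,0))^T$ --- trace extensions of the \emph{other two} components --- which by \autoref{lem:bdm_zero} changes neither $(\hbdmi\,\cdot\,)_1$ nor the divergence, and makes the normal traces on $e_2,e_3$ vanish. It then subtracts the polynomial $\hvec{v}_*=(0,\hat x_2 q_2,\hat x_3 q_3)^T$, where $q_2,q_3\in P_{k-1}(\hat T)$ are weighted $L^2$-projections chosen as in \eqref{eq:q_2}--\eqref{eq:q_3} so that components $2,3$ of the result are orthogonal to all of $P_{k-1}(\hat T)$; this kills every interior DOF involving components $2$ and $3$ (gradient and non-gradient alike), while the factors $\hat x_2,\hat x_3$ keep the $e_2,e_3$ face DOFs zero. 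After integration by parts in the $e_4$ DOF, the interpolant is then determined by $\hat u_1$ and $\divhat\hvec{w}$ alone. This costs an extra term $\norm{\divhat\hvec{v}_*}_{0,\hat T}$, which requires a separate argument --- testing with $(1-\hat x_1-\hat x_2-\hat x_3)\divhat\hvec{v}_*$ --- to obtain $\norm{\divhat\hvec{v}_*}_{0,\hat T}\lesssim\norm{\divhat\hvec{u}}_{0,\hat T}+\norm{\pdv{\hat u_1}{\hat x_1}}_{0,\hat T}$. None of these three ingredients (correcting the other components rather than the first, the weighted projections $q_2,q_3$, the bound on $\divhat\hvec{v}_*$) appears in your proposal, and without them the decoupling you assume remains unproven.
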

\begin{proof}
	We follow the lines of the proof of \cite[Lemma 3.3]{AcostaApelDuranLombardi2011}. Again we detail the proof for the case $i=1$, the other estimates follow analogously. 
	
	Let $\hvec{u}_* = (0, \hat{u}_2(\hat{x}_1,0,\hat{x}_3), \hat{u}_3(\hat{x}_1,\hat{x}_2,0)^T$ and set
	\begin{equation}
		\hvec{v} = \hvec{u} - \hvec{u}_* =  	\begin{pmatrix}
												\hat{u}_1 \\
												\hat{u}_2 - \hat{u}_2(\hat{x}_1,0,\hat{x}_3) \\
												\hat{u}_3 -  \hat{u}_3(\hat{x}_1,\hat{x}_2,0)
											\end{pmatrix}.
	\end{equation}
	Then we know from \autoref{lem:bdm_zero}, that $(\hbdmi \hvec{v})_1 = (\hbdmi \hvec{u})_1$ and additionally $\divhat \hvec{v} = \divhat \hvec{u} - \divhat \hvec{u}_* = \divhat \hvec{u}$.
	
	Now using $\hvec{v}_* = (0,\hat{x}_2 q_2, \hat{x}_3 q_3)^T$ construct another function 
	\begin{equation}
		\hvec{w} = \hvec{v} - \hvec{v}_* = \begin{pmatrix} \hat{v}_1 \\ \hat{v}_2 - \hat{x}_2 {q}_2 \\ \hat{v}_3 -\hat{x}_3 {q}_3 \end{pmatrix},
	\end{equation}
	where  ${q}_2,{q}_3 \in P_{k-1}(\hat{T})$ are chosen so that
	\begin{align}
		\int_{\hat{T}} \hat{w}_2 z = \int_{\hat{T}} (\hat{v}_2 - \hat{x}_2 q_2) z = 0, \quad \forall z\in P_{k-1}(\hat{T}), \label{eq:q_2}\\
		\int_{\hat{T}} \hat{w}_3 z = \int_{\hat{T}} (\hat{v}_3 - \hat{x}_3 q_3) z = 0, \quad \forall z\in P_{k-1}(\hat{T}). \label{eq:q_3}
	\end{align}
	The functions $q_i$, $i=2,3$, are the projections of $\frac{\hat{v}_i}{\hat{x}_i}$ into $P_{k-1}(\hat{T})$ with respect to the weighted scalar product $(q,z)= \int_{\hat{T}} \hat{x}_i q z$, which are well defined for $\hat{v}_i \in L^2(\hat{T})$. 
	
	Since $\hvec{v}_* \in \vec{P}_k(\hat{T})$, we know by the interpolation property of the operator $\hbdmi$, that $\hbdmi \hvec{v}_* = \hvec{v}_*$, and thus $(\hbdmi \hvec{w})_1 = (\hbdmi \hvec{v})_1 = (\hbdmi \hvec{u})_1$. By \eqref{eq:BDM_dofs_1} and \eqref{eq:BDM_dofs_2}, the interpolated function $\hbdmi \hvec{w} = \hvec{p} = (\hat{p}_1,\hat{p}_2,\hat{p}_3)^T$ is thus defined by the relations
	\begin{align*}
		\int_{e_1} \hat{p}_1 z &= \int_{e_1} \hat{w}_1 z = \int_{e_1} \hat{u}_1 z, && \forall z \in P_k(e_1),\\
		\int_{e_2} \hat{p}_2 z &= \int_{e_2} \hat{w}_2 z = \int_{e_2} (\hat{v}_2 - \hat{x}_2 q_2) z = 0, && \forall z \in P_k(e_2), \\
		\int_{e_3} \hat{p}_3 z &= \int_{e_3} \hat{w}_3 z = \int_{e_3} (\hat{v}_3 - \hat{x}_3 q_3) z = 0, && \forall z \in P_k(e_3), \\
		\int_{e_4} (\hvec{p} \cdot \vec{n}_4) z &= \int_{e_4} (\hvec{w} \cdot \vec{n}_4)z \\
		&= \int_{\hat{T}} (\divhat \hvec{w}) z + \int_{\hat{T}} \hvec{w} \cdot \nabla z - \sum_{i=1}^3 \int_{e_i} (\hvec{w} \cdot \vec{n}_i)z \\
		&= \int_{\hat{T}} (\divhat \hvec{w}) z + \int_{\hat{T}} \hat{w}_1 \pdv{z}{\hat{x}_1} - \int_{e_1} \hat{w}_1 z, && \forall z \in P_k(e_4), \\
		\int_{\hat{T}} \hvec{p}\cdot \vec{z} &= \int_{\hat{T}} \hvec{w} \cdot \vec{z}, &&\forall \vec{z} \in \vec{N}_{k-1}(\hat{T}).
	\end{align*}
	Now recall that $\vec{N}_{k-1}(\hat{T}) = \vec{P}_{k-2}(\hat{T}) \oplus \vec{S}_{k-1}(\hat{T})$, then the last equation is equivalent to the individual relations
	\begin{align*}
		\int_{\hat{T}} \hat{p}_1 z &= \int_{\hat{T}} \hat{w}_1 z, &&\forall z\in P_{k-2}(\hat{T}),\\
		\int_{\hat{T}} \hat{p}_2 z &= \int_{\hat{T}} \hat{w}_2 z = 0, &&\forall z\in P_{k-2}(\hat{T}),\\
		\int_{\hat{T}} \hat{p}_3 z &= \int_{\hat{T}} \hat{w}_3 z = 0, &&\forall z\in P_{k-2}(\hat{T}),\\
		\int_{\hat{T}} \hvec{p} \cdot \vec{z} &= \int_{\hat{T}} \hvec{w} \cdot \vec{z} = \int_{\hat{T}} (\hat{w}_1 z_1 + \hat{w}_2 z_2 + \hat{w}_3 z_3) =  \int_{\hat{T}} \hat{w_1}z_1, && \forall \vec{z} \in \vec{S}_{k-1}(\hat{T}),
	\end{align*}
	where we used in the calculations that $q_2$, $q_3$ are chosen according to \eqref{eq:q_2}, \eqref{eq:q_3}. Hence we know, that the interpolant $\hvec{p}$ is defined by the terms $\int_{e_1} \hat{w}_1 z = \int_{e_1} \hat{u}_1 z$, $\int_{\hat{T}} \hat{w}_1 z = \int_{\hat{T}} \hat{u}_1 z$ and $\int_{\hat{T}} (\divhat \hvec{w}) z = \int_{\hat{T}} (\divhat (\hvec{u} - \hvec{v}_*) z$. Thus we can estimate
	\begin{align}
		\norm{(\hbdmi \hvec{u})_1}_{0,\hat{T}} = \norm{\hvec{p}}_{0,\hat{T}} &\leq \norm{\hat{u}_1}_{0,e_1} + \norm{\hat{u}_1}_{0,\hat{T}} + \norm{\divhat \hvec{u}}_{0,\hat{T}} + \norm{\divhat \hvec{v}_*}_{0,\hat{T}} \nonumber\\
		&\leq \norm{\hat{u}_1}_{1,\hat{T}} + \norm{\divhat \hvec{u}}_{0,\hat{T}} + \norm{\divhat \hvec{v}_*}_{0,\hat{T}}, \label{eq:bdmi_u_1}
	\end{align}
	where we used a trace theorem. In order to get to the desired result, we need to estimate $\norm{\divhat \hvec{v}_*}_{0,\hat{T}}$, where we proceed similarly to \cite{AcostaApelDuranLombardi2011}. 
	
	Set $\hvec{v}_0 = (0,\hat{v}_2,\hat{v}_3)^T$. For all $z\in P_k(\hat{T})$ it holds that $\nabla z \in \vec{P}_{k-1}(\hat{T})$, and thus, using the definitions of $q_2$, $q_3$ from \eqref{eq:q_2}, \eqref{eq:q_3} we get
	\begin{align}
		0 &= \int_{\hat{T}} (\hvec{v}_0 - \hvec{v}_*) \cdot \nabla z = \int_{\partial \hat{T}} (\hvec{v}_0 - \hvec{v}_*) \cdot \vec{n} z - \int_{\hat{T}} \divhat(\hvec{v}_0-\hvec{v}_*) z \nonumber\\
		&= \int_{e_4} (\hvec{v}_0 - \hvec{v}_*) \cdot \vec{n}_4 z - \int_{\hat{T}} \divhat (\hvec{v}_0 - \hvec{v}_*)z. \label{eq:bdmi_2}
	\end{align}
	Choose $z = (1-\hat{x}_1-\hat{x}_2-\hat{x}_3) \tilde{z}$, $\tilde{z} \in P_{k-1}(\hat{T})$, then since $z = 0$ on $e_4$, we get from \eqref{eq:bdmi_2}
	\begin{equation}\label{eq:bdmi_3}
		\int_{\hat{T}}(1-\hat{x}_1-\hat{x}_2-\hat{x}_3) (\divhat \hvec{v}_*)\tilde{z} = \int_{\hat{T}}(1-\hat{x}_1-\hat{x}_2-\hat{x}_3) (\divhat \hvec{v}_0)\tilde{z}, \quad \forall \tilde{z} \in P_{k-1}(\hat{T}).
	\end{equation}
	Now choose $\tilde{z} = \divhat \hvec{v}_*$. Then using the equivalence of norms in finite dimensional spaces, \eqref{eq:bdmi_3} and the Cauchy-Schwarz inequality we conclude
	\begin{align*}
		\norm{\divhat \hvec{v}_*}_{0,\hat{T}}^2 &\lesssim \int_{\hat{T}} (1-\hat{x}_1-\hat{x}_2-\hat{x}_3) (\divhat \hvec{v}_*)^2 \\
		&\lesssim \int_{\hat{T}} (1-\hat{x}_1-\hat{x}_2-\hat{x}_3) (\divhat \hvec{v}_*)(\divhat \hvec{v}_0) \\
		&\lesssim \sup_{\hvec{x}\in \hat{T}} (1-\hat{x}_1-\hat{x}_2-\hat{x}_3) \norm{\divhat \hvec{v}_*}_{0,\hat{T}} \norm{\divhat \hvec{v}_0}_{0,\hat{T}}.
	\end{align*}
	Finally we can estimate 
	\begin{align*}
		\norm{\divhat \hvec{v}_*}_{0,\hat{T}} \lesssim \norm{\divhat \hvec{v}_0}_{0,\hat{T}} \leq \norm{\divhat \hvec{v}}_{0,\hat{T}} + \norm{\pdv{\hat{v}_1}{\hat{x}_1}}_{0,\hat{T}} = \norm{\divhat \hvec{u}}_{0,\hat{T}} + \norm{\pdv{\hat{u}_1}{\hat{x}_1}}_{0,\hat{T}},
	\end{align*}
	which combined with \eqref{eq:bdmi_u_1} gets us the final estimate
	\begin{equation*}
		\norm{(\hbdmi \hvec{u})_1}_{0,\hat{T}} \lesssim \norm{\hat{u}_1}_{1,\hat{T}} + \norm{\divhat \hvec{u}}_{0,\hat{T}}. \qedhere
	\end{equation*}
\end{proof}

Consider now the transformation
\begin{equation}\label{eq:generaltransformation}
	\tilde{\vec{x}} = J_{\tilde{T}} \hat{\vec{x}}
\end{equation}
of the reference element $\hat{T}$ on the element $\tilde{T}$ of the reference family $\mathcal{T}_1$, with 
\begin{equation}\label{eq:reference_family_transformation}
	J_{\tilde{T}} = \begin{pmatrix}	h_1 &  & 0 \\ & \ddots & \\ 0 & & h_d\end{pmatrix} \in \R^{d\times d},
\end{equation}
where $h_i$, $i \in I_d$, are the element size parameters pictured in \autoref{fig:ReferenceTriangle} and \ref{fig:ReferenceTetrahedron}.
Then by the contra-variant Piola transformation a function $\hat{\vec{v}} \in \vec{L}^2(\hat{T})$ gets transformed into a function $\tilde{\vec{v}} \in \vec{L}^2(\tilde{T})$, which has the form
\begin{equation*}
	\tilde{\vec{v}}(\tilde{\vec{x}}) = \frac{1}{\det J_{\tilde{T}}} J_{\tilde{T}} \hat{\vec{v}}(\hat{\vec{x}}) = \begin{pmatrix}	{}_1 h^{-1} &  & 0 \\ & \ddots & \\ 0 & & {}_d h^{-1}\end{pmatrix}\hat{\vec{v}}(\hat{\vec{x}}),
\end{equation*}
where we used the shorthand ${}_i h = \prod_{j\in {}_i I_d} h_j$. 

\begin{lemma}\label{lem:transformed_element_MAC+RVP}
	Let $\tilde{\vec{v}} \in \vec{H}^1(\tilde{T})$, where $\tilde{T} = J_{\tilde{T}} \hat{T} + \vec{x}_0$. Then on the transformed element $\tilde{T}$ we have the estimate
	\begin{equation}\label{eq:transformed_element_MAC}
		\norm{\tbdmi \tilde{\vec{v}}}_{0,\tilde{T}} \lesssim \sum_{\abs{\alpha}\leq 1} h^\alpha \norm{D^\alpha \tilde{\vec{v}}}_{0,\tilde{T}} + h_{\tilde{T}} \norm{\divtilde \tilde{\vec{v}}}_{0,\tilde{T}},
	\end{equation}		
	where $h_{\tilde{T}} = \max\{h_i: i\in I_d\}$.
\end{lemma}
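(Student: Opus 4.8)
The plan is to reduce the claim to the reference estimate of \autoref{lem:Estimate_reference_element} by a scaling argument, using that the contra-variant Piola transform commutes with the Brezzi-Douglas-Marini interpolation. Since $\tilde{T}$ arises from $\hat{T}$ through the diagonal map \eqref{eq:reference_family_transformation}, every norm on $\tilde{T}$ can be pulled back to $\hat{T}$ with explicit powers of the $h_i$, and \autoref{lem:Estimate_reference_element} then does the work.

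First I would record the commuting relation $\tbdmi \tilde{\vec{v}} = \frac{1}{\det J_{\tilde{T}}} J_{\tilde{T}}\,(\hbdmi \hat{\vec{v}})$, where $\hat{\vec{v}}$ is the Piola pullback of $\tilde{\vec{v}}$. This is the standard commuting-diagram property of $\vec{H}(\operatorname{div})$-conforming interpolation and follows from the fact that the degrees of freedom \eqref{eq:BDM_dofs_1}, \eqref{eq:BDM_dofs_2} (normal moments on the facets and interior moments against $\vec{N}_{k-1}$) are invariant under the Piola transform. For the diagonal $J_{\tilde{T}}$ it reads componentwise $(\tbdmi \tilde{\vec{v}})_i = {}_i h^{-1}\,(\hbdmi \hat{\vec{v}})_i$.

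Next I would carry out the change of variables $\tilde{\vec{x}} = J_{\tilde{T}} \hat{\vec{x}}$, for which $\dd \tilde{\vec{x}} = (\det J_{\tilde{T}})\,\dd \hat{\vec{x}}$. A direct computation gives $\norm{(\tbdmi \tilde{\vec{v}})_i}_{0,\tilde{T}} = \sqrt{h_i/{}_i h}\,\norm{(\hbdmi \hat{\vec{v}})_i}_{0,\hat{T}}$, and \autoref{lem:Estimate_reference_element} bounds the right-hand side by $\sqrt{h_i/{}_i h}\,(\norm{\hat{v}_i}_{1,\hat{T}} + \norm{\divhat \hat{\vec{v}}}_{0,\hat{T}})$. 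It then remains to transform this bound back onto $\tilde{T}$ using $\hat{v}_i = {}_i h\,\tilde{v}_i$ and the chain rule $\partial_{\hat{x}_j} = h_j\,\partial_{\tilde{x}_j}$. The point is that the prefactor $\sqrt{h_i/{}_i h}$ cancels exactly: the zeroth-order term becomes $\norm{\tilde{v}_i}_{0,\tilde{T}}$, each first derivative $\partial_{\hat{x}_j}\hat{v}_i$ becomes $h_j\,\norm{\partial_{\tilde{x}_j}\tilde{v}_i}_{0,\tilde{T}}$ (so the $H^1$-part assembles precisely into $\sum_{\abs{\alpha}\leq 1} h^\alpha \norm{D^\alpha \tilde{v}_i}_{0,\tilde{T}}$), and, since the Piola transform sends $\divhat \hat{\vec{v}} = (\det J_{\tilde{T}})\,\divtilde \tilde{\vec{v}}$, the divergence term becomes $h_i\,\norm{\divtilde \tilde{\vec{v}}}_{0,\tilde{T}} \leq h_{\tilde{T}}\,\norm{\divtilde \tilde{\vec{v}}}_{0,\tilde{T}}$. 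Summing over the finitely many components $i\in I_d$ then yields \eqref{eq:transformed_element_MAC}.

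The genuinely load-bearing ingredients are the reference estimate \autoref{lem:Estimate_reference_element} and the commuting property of the Piola transform with $\tbdmi$; everything else is careful bookkeeping of the powers of $h_i$ and ${}_i h$. The only point demanding attention is verifying that these scaling factors collapse exactly as claimed, so that the weight $\sqrt{h_i/{}_i h}$ from the Piola scaling of the interpolant is cancelled by the inverse scaling of the right-hand side, leaving the clean anisotropic weights $h^\alpha$ together with the single factor $h_{\tilde{T}}$ on the divergence. I expect no conceptual obstacle here, only the need to track indices correctly.
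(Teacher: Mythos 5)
Your proof is correct and follows essentially the same route as the paper's: pull the interpolant back to the reference element $\hat{T}$ via the contra-variant Piola transform (using that $\bdmi$ commutes with it), apply the reference stability estimate of \autoref{lem:Estimate_reference_element}, and track the anisotropic scaling factors back onto $\tilde{T}$, where they cancel to leave the weights $h^\alpha$ and the single factor $h_{\tilde{T}}$ on the divergence term. The only cosmetic difference is that you organize the computation as exact componentwise identities with the prefactor $\sqrt{h_i/{}_i h}$ and state the commuting-diagram property explicitly, whereas the paper records the same scalings in the displayed relations \eqref{eq:L2_norm_Piola} and \eqref{eq:H1_norm_Piola} and uses the commuting property implicitly.
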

\begin{proof}
	The following proof is essentially the proof of \cite[Proposition 3.4]{AcostaApelDuranLombardi2011}.
	By straightforward calculations we observe
	\begin{align}
		\norm{\tilde{\vec{v}}}_{0,\tilde{T}} &= \left(\int_{\tilde{T}}  \sum_{i\in I_d} \tilde{v}_i^2 \right)^{\nicefrac{1}{2}} \nonumber\\
		&\leq  (\det J_{\tilde{T}})^{\nicefrac{1}{2}} \sum_{i\in I_d} {_i h}^{-1}\left(\int_{\hat{T}} \hat{v}_i^2\right)^{\nicefrac{1}{2}} = (\det J_{\tilde{T}})^{\nicefrac{1}{2}} \sum_{i\in I_d} {_i h}^{-1}\norm{\hat{v}_i}_{0,\hat{T}},\label{eq:L2_norm_Piola}
	\end{align}
	and for $i\in I_d$
	\begin{equation}\label{eq:H1_norm_Piola}
		(\det J_{\tilde{T}})^{\nicefrac{1}{2}} \norm{\hat{v}_i}_{1,\hat{T}} = {_i h} \sum_{\abs{\alpha}\leq 1} h^\alpha \norm{D^\alpha \tilde{v}_i}_{0,\tilde{T}}.
	\end{equation}
	Now using \eqref{eq:L2_norm_Piola}, \autoref{lem:Estimate_reference_element} and \eqref{eq:H1_norm_Piola} we get
	\begin{align*}
		\norm{\tbdmi \tilde{\vec{v}}}_{0,\tilde{T}} &\leq (\det J_{\tilde{T}})^{\nicefrac{1}{2}} \sum_{i\in I_d} {_i h}^{-1} \norm{(\hbdmi \hat{\vec{v}})_i}_{0,\hat{T}} \\
		&\lesssim (\det J_{\tilde{T}})^{\nicefrac{1}{2}} \sum_{i\in I_d} {_i h}^{-1} \left( \norm{\hat{v}_i}_{1,\hat{T}} + \norm{\divhat \hat{\vec{v}}}_{0,\hat{T}} \right) \\
		&\lesssim \sum_{i\in I_d} {_i h}^{-1} \left({_i h}\sum_{\abs{\alpha}\leq 1} h^\alpha \norm{D^\alpha \tilde{v}_i}_{0,\tilde{T}} + \det J_{\tilde{T}} \norm{\divtilde \tilde{\vec{v}}}_{0,\tilde{T}}  \right) \\
		&\lesssim \sum_{\abs{\alpha}\leq 1} h^\alpha \norm{D^\alpha \tilde{\vec{v}}}_{0,\tilde{T}} + h_{\tilde{T}} \norm{\divtilde \tilde{\vec{v}}}_{0,\tilde{T}}. \qedhere
	\end{align*}
\end{proof}	

We now get to the main result of this subsection.
\begin{theorem}\label{th:stability_RVP}
	Let the element $T$ satisfy a regular vertex property $\RVP(\bar{c})$, let $\vec{p}_{d+1}$ be the regular vertex, $\vec{l}_i$ and $h_i$, $i\in I_d$, the corresponding vectors and element size parameters from \autoref{def:regular_vertex_property}. Then for $\vec{v} \in \vec{H}^1(T)$ the estimate
	\begin{equation}\label{eq:stability_RVP}
		\norm{\bdmi \vec{v}}_{0,T} \lesssim \norm{\vec{v}}_{0,T} + \sum_{j\in I_d} h_j \norm{\pdv{\vec{v}}{\vec{l}_j}}_{0,T} + h_T\norm{\divt \vec{v}}_{0,T}
	\end{equation}
	holds, where the constant only depends on $\bar{c}$.
\end{theorem}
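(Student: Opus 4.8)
The plan is to transfer the estimate from the reference family $\mathcal{T}_1$, where \autoref{lem:transformed_element_MAC+RVP} already applies, to the general element $T$ by an affine change of variables that respects the $\vec{H}(\operatorname{div})$ structure. By \autoref{lem:RVP} there is an element $\tilde{T}\in\mathcal{T}_1$ and an affine map $F(\tilde{\vec{x}})=J_T\tilde{\vec{x}}+\vec{x}_0$ with $\norm{J_T}_\infty,\norm{J_T^{-1}}_\infty\le C$, $C=C(\bar{c})$, carrying $\tilde{T}$ onto $T$, and we may arrange the edge lengths at the regular vertex to be exactly $h_i$ in direction $\tilde{x}_i$. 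The crucial geometric observation is then that, since the regular vertex of $\tilde{T}$ sits at the origin with unit edges $\vec{e}_i$ and is sent to $\vec{p}_{d+1}$ with the edge towards $\vec{p}_i$ having length $h_i$, the columns of $J_T$ are exactly the unit edge vectors, $J_T\vec{e}_i=\vec{l}_i$. Consequently $\det J_T=\det N_{d+1}$ is bounded above and below in terms of $\bar{c}$.

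I would pull $\vec{v}$ back to $\tilde{\vec{v}}=(\det J_T)\,J_T^{-1}\,\vec{v}\circ F$ on $\tilde{T}$ via the contra-variant Piola transformation. The key structural fact I would invoke explicitly is that the Brezzi-Douglas-Marini interpolation commutes with the Piola transformation, so that $\bdmi\vec{v}$ is precisely the Piola image of $\tbdmi\tilde{\vec{v}}$; this is what makes the whole reduction possible and mirrors the Raviart-Thomas argument of \cite{AcostaApelDuranLombardi2011}. Applying the scaling rules for the $L^2$ norm and for the divergence (which picks up a factor $(\det J_T)^{-1}$), and using $\norm{J_T}_\infty,\norm{J_T^{-1}}_\infty\le C$, I obtain $\norm{\bdmi\vec{v}}_{0,T}\lesssim(\det J_T)^{-\nicefrac{1}{2}}\norm{\tbdmi\tilde{\vec{v}}}_{0,\tilde{T}}$, to which \autoref{lem:transformed_element_MAC+RVP} applies.

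It then remains to transform the right-hand side of \eqref{eq:transformed_element_MAC} back to $T$. The zero-order term and the divergence term are routine: both scale by $(\det J_T)^{\nicefrac{1}{2}}$, which cancels the prefactor, and $h_{\tilde{T}}=\max_i h_i$ is comparable to $h_T$ because $J_T$ and $J_T^{-1}$ are bounded. The heart of the matter is the first-order terms $h_i\norm{\partial_{\tilde{x}_i}\tilde{\vec{v}}}_{0,\tilde{T}}$. Here I would use the chain rule: because $J_T\vec{e}_i=\vec{l}_i$, differentiating $\vec{v}\circ F$ in the reference direction $\tilde{x}_i$ produces exactly the directional derivative along the edge, so that $\partial_{\tilde{x}_i}\tilde{\vec{v}}=(\det J_T)\,J_T^{-1}\,(\pdv{\vec{v}}{\vec{l}_i})\circ F$. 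Bounding $J_T^{-1}$ by its norm and changing variables back to $T$ yields $h_i\norm{\partial_{\tilde{x}_i}\tilde{\vec{v}}}_{0,\tilde{T}}\lesssim(\det J_T)^{\nicefrac{1}{2}}\,h_i\norm{\pdv{\vec{v}}{\vec{l}_i}}_{0,T}$. Collecting the three contributions and cancelling $(\det J_T)^{\nicefrac{1}{2}}$ against the prefactor gives \eqref{eq:stability_RVP}, with a constant depending only on $\bar{c}$ through the bounds on $J_T$.

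The step I expect to require the most care is this identification of the reference-frame Cartesian derivative with the edge-directional derivative on $T$: one must track simultaneously the $J_T^{-1}$ from the Piola transformation of the vector field and the $J_T$ from the chain rule applied to the argument, and verify that the determinant factors cancel consistently across every term, so that no spurious aspect-ratio factors $h_i/h_j$ survive. The commuting property of $\bdmi$ with Piola is the other essential ingredient, and I would make it explicit rather than leave it implicit.
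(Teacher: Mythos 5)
Your proposal is correct and follows essentially the same route as the paper's proof: reduction to the reference family $\mathcal{T}_1$ via \autoref{lem:RVP} with $J_T\vec{e}_i=\vec{l}_i$, the contra-variant Piola transformation together with the commutation of $\bdmi$ with Piola (which the paper uses implicitly), application of \autoref{lem:transformed_element_MAC+RVP}, and the scaling identities $\pdv{\tilde{\vec{v}}}{\tilde{x}_j}=(\det J_T)J_T^{-1}\pdv{\vec{v}}{\vec{l}_j}$ and $\divt\vec{v}=(\det J_T)^{-1}\divtilde\tilde{\vec{v}}$, with all constants controlled by $\norm{J_T}_\infty,\norm{J_T^{-1}}_\infty\leq C(\bar{c})$. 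Your explicit treatment of the commutation property and of the determinant bounds is a welcome clarification, but it is the same argument.
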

\begin{proof}
	The steps of the proof are the same as the proof of \cite[Theorem 3.1]{AcostaApelDuranLombardi2011}. For completeness, we repeat it here.
	We assume that the regular vertex $\vec{p}_{d+1}$ is located at the origin. By \autoref{lem:RVP} there exists an element $\tilde{T}\in \mathcal{T}_1$ and a matrix $J_T\in \R^{d\times d}$, so that $\tilde{T}$ is mapped by $\vec{x} = J_T\tilde{\vec{x}}$ onto $T$ and $J_T\vec{e}_i = \vec{l}_i$, $i\in I_d$. Let $\vec{v} \in \vec{H}^1(T)$ be the Piola transform of $\tilde{\vec{v}}\in \vec{H}^1(\tilde{T})$, i.e.
	\begin{equation*}
		\vec{v} (\vec{x}) = (\det J_T)^{-1} J_T\tilde{\vec{v}}(\tilde{\vec{x}}).
	\end{equation*}
	Using \autoref{lem:transformed_element_MAC+RVP} we get
	\begin{equation*}
		\norm{\bdmi \vec{v}}_{0,T}^2 \lesssim \frac{\norm{J_T}_{\infty}^2}{\det J_T} \left(\norm{\tilde{\vec{v}}}_{0,\tilde{T}}^2 + \sum_{j\in I_d} h_j^2\norm{\pdv{\tilde{\vec{v}}}{\tilde{x}_j}}_{0,\tilde{T}}^2 + h_{\tilde{T}}^2 \norm{\divtilde \tilde{\vec{v}}}_{0,\tilde{T}}^2 \right).
	\end{equation*}
	We also have 
	\begin{equation*}
		\pdv{\tilde{\vec{v}}}{\tilde{x}_j} = (\det J_T) J_T^{-1} \pdv{\vec{v}}{\vec{l}_j}, \quad \divt \vec{v}(\vec{x}) = (\det J_T)^{-1} \divtilde \tilde{\vec{v}}(\tilde{\vec{x}}), \quad h_{\tilde{T}} = \norm{J_T^{-1}}_{\infty} h_T.
	\end{equation*}
	Combining these we get
	\begin{equation*}
		\norm{\bdmi \vec{v}}_{0,T}^2 \lesssim \norm{J_T}_{\infty}^2 \norm{J_T^{-1}}_{\infty}^2 \left(\norm{\vec{v}}_{0,T}^2 + \sum_{j\in I_d} h_j^2 \norm{\pdv{\vec{v}}{\vec{l}_j}}_{0,T}^2 + h_T^2 \norm{\divt \vec{v}}_{0,T}^2 \right),
	\end{equation*}
	and with $\norm{J_T}_{\infty}, \norm{J_T^{-1}}_{\infty} \leq C$ from \autoref{lem:RVP}, where $C$ depends only on $\bar{c}$, we conclude the proof.
\end{proof}

\subsection{Stability without regular vertex property}
As mentioned before, only in three dimensions there can be elements satisfying the maximum angle condition but not a regular vertex property, so in this subsection we restrict our observations to the case $d=3$. 

As shown in \cite[Proposition 5.1]{AcostaApelDuranLombardi2011} with an example, estimates like \eqref{eq:estimate_ref_elem} can not be obtained for the Raviart-Thomas element family of any order for elements not satisfying a regular vertex property. Thus in \cite[Proposition 4.4]{AcostaApelDuranLombardi2011} a relaxed estimate for the Raviart-Thomas interpolant is proved, which we can also show for the Brezzi-Douglas-Marini case.
As in the last subsection, we start with a technical lemma.
\begin{lemma}\label{lem:bdm_zero_noRVP}
	On the reference element $\bar{T}$ with facets $e_i$, $i\in I_{d+1}$, let $\bar{f}_1 \in {L}^2(e_1)$, $\bar{f}_2 \in {L}^2(\bar{e}_2)$, $\bar{f}_3 \in {L}^2(e_3)$, where $\bar{e}_2$ is the projection of $e_2$ onto the plane $\bar{x}_2=0$, and
	\begin{align*}
		\bvec{u}(\bvec{x}) = (\bar{f}_1(\bar{x}_2,\bar{x}_3), 0, 0)^T, \quad \bvec{v}(\bvec{x}) = (0,\bar{f}_2(\bar{x}_1,\bar{x}_3),0)^T, \quad \bvec{w}(\bvec{x}) = (0,0,\bar{f}_3(\bar{x}_1,\bar{x}_2))^T.
	\end{align*}
	Then there are functions $\bar{q}_1\in P_k(e_1)$, $\bar{q}_2\in P_k(\bar{e}_2)$, $\bar{q}_3\in P_k(e_3)$, so that
	\begin{align*}
		\bbdmi\bvec{u} = (\bar{q}_1(\bar{x}_2,\bar{x}_3), 0, 0)^T, \; \bbdmi\bvec{v} = (0,\bar{q}_2(\bar{x}_1,\bar{x}_3),0)^T, \; \bbdmi\bvec{w} = (0,0,\bar{q}_3(\bar{x}_1,\bar{x}_2))^T.
	\end{align*}
\end{lemma}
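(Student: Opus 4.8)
The plan is to follow the proof of \autoref{lem:bdm_zero} line by line, using that the defining relations \eqref{eq:BDM_dofs_1}--\eqref{eq:BDM_dofs_2} determine the interpolant uniquely: for each of $\bvec{u},\bvec{v},\bvec{w}$ I would exhibit a candidate of the claimed form and check that it satisfies every facet and interior relation, so that by uniqueness it equals the true interpolant. I treat $\bvec{u}$ first and obtain the other two by relabelling coordinates. With the numbering of $\bar{T}$ fixed so that the stated component–facet correspondence holds, the outward normals are $\vec{n}_1=(-1,0,0)^T$ on the facet $e_1\subset\{\bar{x}_1=0\}$, $\vec{n}_3=(0,0,-1)^T$ on $e_3\subset\{\bar{x}_3=0\}$, the slanted $\vec{n}_2=\tfrac{1}{\sqrt2}(1,-1,0)^T$ on $e_2\subset\{\bar{x}_1=\bar{x}_2\}$, and the far $\vec{n}_4=\tfrac{1}{\sqrt2}(0,1,1)^T$ on $e_4\subset\{\bar{x}_2+\bar{x}_3=1\}$. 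For $\bvec{u}$ the relation \eqref{eq:BDM_dofs_1} on $e_1$ reads $\int_{e_1}\bar{q}_1 z=\int_{e_1}\bar{f}_1 z$ for all $z\in P_k(e_1)$ and defines $\bar{q}_1\in P_k(e_1)$ uniquely; it remains to verify the other relations for the candidate $(\bar{q}_1,0,0)^T$.

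The new feature compared with $\hat{T}$ is the slanted facet. For $\bvec{u}$ the facets $e_3$ and $e_4$ have vanishing first normal component, so their relations hold trivially, and the only nontrivial one is on $e_2$. Parametrising $e_2$ by $(\bar{x}_1,\bar{x}_3)$ and using $\bar{x}_2=\bar{x}_1$ there, the surface integral $\int_{e_2}(\bar{f}_1-\bar{q}_1)z$ becomes, up to a constant surface element, an integral over the projected triangle $\bar{e}_2\subset\{\bar{x}_2=0\}$ of $(\bar{f}_1-\bar{q}_1)$ against $z(\bar{x}_1,\bar{x}_1,\bar{x}_3)\in P_k(\bar{e}_2)$. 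Since $\bar{e}_2$ coincides with $e_1$ after renaming $\bar{x}_2\leftrightarrow\bar{x}_1$, and neither $\bar{f}_1$ nor $\bar{q}_1$ depends on $\bar{x}_1$, this integral vanishes by the defining relation for $\bar{q}_1$. Thus the slanted-facet relation is a consequence of the $e_1$-relation, exactly the role played by $e_4$ in \autoref{lem:bdm_zero}.

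The genuinely harder case is $\bvec{v}$, which is also why the statement places $\bar{q}_2$ in $P_k(\bar{e}_2)$ rather than on a true facet: both $e_2$ and $e_4$ have nonvanishing second normal component, so \emph{two} facet relations are active. The key observation is that $e_2$ and $e_4$ project along $\vec{e}_2$ onto the \emph{same} triangle $\bar{e}_2$ (on $e_4$ one uses $\bar{x}_2=1-\bar{x}_3$). I would define $\bar{q}_2\in P_k(\bar{e}_2)$ by the $e_2$-relation, i.e.\ as the $L^2(\bar{e}_2)$-projection of $\bar{f}_2$; the same projection argument applied to $e_4$ then reduces the $e_4$-relation to the identical integral over $\bar{e}_2$, so it is automatically satisfied. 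The case $\bvec{w}$ is symmetric to $\bvec{u}$, with $e_3$ as defining facet and $e_4$ as the single nontrivial facet, both projecting along $\vec{e}_3$ onto a common triangle in the $(\bar{x}_1,\bar{x}_2)$-plane.

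For the interior relations, present only when $k\geq2$ since $\vec{N}_0(\bar{T})=\{0\}$, I would argue as in \autoref{lem:bdm_zero}: for $\vec{z}\in\vec{N}_{k-1}(\bar{T})\subset\vec{P}_{k-1}(\bar{T})$ choose $Z\in P_k(\bar{T})$ with $\partial Z/\partial\bar{x}_i=z_i$ in the relevant direction, so that integration by parts turns $\int_{\bar{T}}\bar{f}_i z_i$ into the boundary term $\int_{\partial\bar{T}}\bar{f}_i Z\,n_i$ because $\bar{f}_i$ is independent of $\bar{x}_i$. This term is supported precisely on the two facets with nonzero $i$-th normal component already handled above, on which $Z$ restricts to a polynomial of degree at most $k$, so each contribution of $(\bar{f}_i-\bar{q}_i)$ vanishes by the facet relations. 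I expect the main obstacle to be the bookkeeping of the slanted geometry: identifying, for each direction $i\in\{1,2,3\}$, exactly which two facets are active and checking that their parametrisations land in one common projected triangle, so that a single defining relation controls both normal relations and the interior ones. Once this geometric identification is secured, compatibility follows as in the regular-vertex case and uniqueness of the interpolant concludes the proof.
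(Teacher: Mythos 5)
Your proposal is correct and follows essentially the same route as the paper: define $\bar{q}_i$ by the facet relation on the coordinate facet, verify the trivial facets, reduce the slanted-facet relation(s) to the defining one by projection/reparametrisation, and handle the interior moments by integrating by parts against an antiderivative $Z$, concluding by unisolvence. The only difference is that the paper writes out just the $\bvec{u}$ case and declares the rest analogous, whereas you additionally spell out the $\bvec{v}$ case --- correctly identifying that $e_2$ and $e_4$ share the projection $\bar{e}_2$, which is exactly why the statement places $\bar{q}_2$ in $P_k(\bar{e}_2)$.
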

\begin{proof}
	We proceed similarly to the proof of \autoref{lem:bdm_zero}, by showing that $\bar{q}_1$ is uniquely defined by the interpolation operator $\bbdmi$.
	
	The normal vectors of the facets of $\bar{T}$ are $\vec{n}_1 = (-1,0,0)^T$, $\vec{n}_2 = \frac{1}{\sqrt{2}}(1,-1,0)^T$, $\vec{n}_3 = (0,0,-1)^T$, $\vec{n}_4 = \frac{1}{\sqrt{2}}(0,1,1)^T$, see also \autoref{fig:ReferenceTetrahedron_noRVP}. By inserting the components into the relation \eqref{eq:BDM_dofs_1}, we get for $i=1$
	\begin{equation}\label{eq:bdm_zero_noRVP_1}
		\int_{e_1} \bar{f}_1 z = \int_{e_1} \bar{q}_1 z, \quad \forall z\in P_k(e_1),
	\end{equation}
	which again already defines $\bar{q}_1$ uniquely. For $i=3,4$ we again get trivial equalities, and for $i=2$ we calculate, using \eqref{eq:bdm_zero_noRVP_1},
	\begin{align*}
		\int_{e_2} (\bvec{u}-\bbdmi\bvec{u})\cdot \vec{n}_2 z &= \frac{1}{\sqrt{2}} \int_{e_2} (\bar{f}_1 - \bar{q}_1)z \\
		&= \int_0^1 \int_0^{1-\bar{x}_2} (\bar{f}_1(\bar{x}_2,\bar{x}_3) - \bar{q}_1(\bar{x}_2,\bar{x}_3))z(\bar{x}_2,\bar{x}_2,\bar{x}_3)  \dd \bar{x}_3 \dd \bar{x}_2 \\
		&= \int_{e_1} (\bar{f}_1(\bar{x}_2,\bar{x}_3) - \bar{q}_1(\bar{x}_2,\bar{x}_3))z(\bar{x}_2,\bar{x}_2,\bar{x}_3)  \dd \bar{x}_3 \dd \bar{x}_2 = 0.
	\end{align*}
	For the internal interpolation conditions \eqref{eq:BDM_dofs_2} we take an arbitrary $\vec{z}\in\vec{N}_{k-1}(\bar{T})$, and choose $Z\in P_k(\bar{T})$ so that $\pdv{Z}{\bar{x}_1} = z_1$. Let $\vec{n}=(n_1,n_2,n_3)$ be the outward normal vector on $\partial \bar{T}$. Then we calculate
	\begin{align*}
		\int_{\bar{T}} \bvec{u} \cdot \vec{z} &= \int_{\bar{T}} \bar{f}_1 z_1 = \int_{\bar{T}} \bar{f}_1 \pdv{Z}{\bar{x}_1} = \int_{\partial \bar{T}} \bar{f} Z n_1 - \int_{\bar{T}} \pdv{\bar{f}_1}{\bar{x}_1} Z \\
		&= \int_{\partial\bar{T}} \bar{q}_1 Z n_1 = \int_{\bar{T}} \bar{q}_1 \pdv{Z}{\bar{x}_1} = \int_{\bar{T}} \bar{q}_1 z_1,
	\end{align*}	
	which concludes the proof for $i=1$, the other results follow analogously.
\end{proof}

Now we show a stability estimate on the reference element $\bar{T}$.
\begin{lemma}\label{lem:Estimate_reference_element_noRVP}
	Let $\bvec{u} \in \vec{H}^1(\bar{T})$, then the estimates
	\begin{equation}\label{eq:estimate_ref_elem_norvp}
		\norm{(\bbdmi\bvec{u})_i}_{0,\bar{T}} \lesssim \norm{{\bar{u}_i}}_{1,\bar{T}} + \sum_{j\in {}_i I_d} \norm{\pdv{\bar{u}_j}{\bar{x}_j}}_{0,\bar{T}}, \quad i\in I_d,
	\end{equation}
	hold.
\end{lemma}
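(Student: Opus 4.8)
The plan is to follow the proof of \autoref{lem:Estimate_reference_element} essentially line by line, with $\hat T$ replaced by $\bar T$ and \autoref{lem:bdm_zero} replaced by \autoref{lem:bdm_zero_noRVP}, treating each component $i$ separately. I detail $i=1$ and indicate afterwards why $i=2$ is the genuine obstacle. As in the regular case, the idea is to strip $\bvec u$ down in two steps to a function whose $i$-th interpolated component still equals $(\bbdmi\bvec u)_i$ but whose degrees of freedom \eqref{eq:BDM_dofs_1}, \eqref{eq:BDM_dofs_2} are trivial except for a few terms that a trace theorem controls by the right-hand side of \eqref{eq:estimate_ref_elem_norvp}.

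For $i=1$ I first subtract $\bvec u_* = (0,\bar u_2(\bar x_1,\bar x_1,\bar x_3),\bar u_3(\bar x_1,\bar x_2,0))^T$, where the second component is the trace of $\bar u_2$ along the skew facet $e_2$ read off on the projection $\bar e_2$ (so it is independent of $\bar x_2$) and the third is the trace of $\bar u_3$ on $e_3$ (independent of $\bar x_3$). Then $\divbar\bvec u_*=0$ and, by \autoref{lem:bdm_zero_noRVP}, $(\bbdmi\bvec u_*)_1 = 0$, so $\bvec v := \bvec u-\bvec u_*$ satisfies $(\bbdmi\bvec v)_1 = (\bbdmi\bvec u)_1$, $\divbar\bvec v = \divbar\bvec u$, while $\bar v_2$ vanishes on $e_2$ and $\bar v_3$ on $e_3$. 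Next I subtract the polynomial $\bvec v_* = (0,(\bar x_2-\bar x_1)q_2,\bar x_3 q_3)^T\in\vec P_k(\bar T)$ with $q_2,q_3\in P_{k-1}(\bar T)$ chosen so that $\bvec w := \bvec v-\bvec v_*$ has $\int_{\bar T}\bar w_2 z = \int_{\bar T}\bar w_3 z = 0$ for all $z\in P_{k-1}(\bar T)$; crucially the skew factor $\bar x_2-\bar x_1$, which vanishes on $e_2$, replaces the factor $\bar x_2$ of the regular proof, so that $\bar w_2$ again vanishes on $e_2$. Since $\bvec v_*$ is reproduced exactly, $(\bbdmi\bvec w)_1 = (\bbdmi\bvec u)_1$.

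With these reductions the relations defining $\bvec p:=\bbdmi\bvec w$ collapse: the normal trace on $e_3$ vanishes, those on $e_1$ and $e_2$ reduce to facet traces of $\bar w_1=\bar u_1$, the one on $e_4$ is rewritten by integration by parts into a divergence term plus lower-order traces of $\bar u_1$, and the internal moments against $\vec N_{k-1}(\bar T) = \vec P_{k-2}(\bar T)\oplus\vec S_{k-1}(\bar T)$ only see $\bar w_1$. A trace theorem then gives $\norm{(\bbdmi\bvec u)_1}_{0,\bar T}\lesssim \norm{\bar u_1}_{1,\bar T} + \norm{\divbar\bvec u}_{0,\bar T} + \norm{\divbar\bvec v_*}_{0,\bar T}$, and it remains to bound $\norm{\divbar\bvec v_*}_{0,\bar T}$. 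Here I repeat the weighted-testing argument of \autoref{lem:Estimate_reference_element}: since $\bar w_2=0$ on $e_2$ and $\bar w_3=0$ on $e_3$, the only surviving boundary contribution of $\bvec v_0 := (0,\bar v_2,\bar v_3)^T$ sits on $e_4$, so testing against $z=(1-\bar x_2-\bar x_3)\tilde z$ (which vanishes on $e_4$) and using the positivity of the weight yields $\norm{\divbar\bvec v_*}_{0,\bar T}\lesssim\norm{\divbar\bvec v_0}_{0,\bar T} = \norm{\partial_2\bar u_2 + \partial_3\bar u_3}_{0,\bar T}\le\sum_{j\in{}_1 I_d}\norm{\pdv{\bar u_j}{\bar x_j}}_{0,\bar T}$, because the subtracted traces are independent of $\bar x_2$, $\bar x_3$. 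Combining the two displays and absorbing $\norm{\divbar\bvec u}_{0,\bar T}\le\norm{\bar u_1}_{1,\bar T}+\sum_{j\in{}_1 I_d}\norm{\pdv{\bar u_j}{\bar x_j}}_{0,\bar T}$ gives \eqref{eq:estimate_ref_elem_norvp} for $i=1$; the case $i=3$ is analogous, with the single surviving facet being $e_2$ rather than $e_4$.

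The hard case is $i=2$, and it is exactly here that the skew geometry of $\bar T$ prevents recovering the full divergence on the right-hand side. After the first reduction the transverse components $\bar w_1$ and $\bar w_3$ of $\bvec v_0-\bvec v_*$ are paired, through the facet normals, with the disjoint facet sets $\{e_1,e_2\}$ (those carrying $\bar x_1$) and $\{e_3,e_4\}$ (those carrying $\bar x_3$), which share no common facet; consequently, no matter how the subtracted traces are chosen, two residual boundary terms survive and cannot be annihilated simultaneously by a single linear weight, as could be done for $i=1,3$. The resolution is to keep one of them and estimate it directly by a trace inequality: the difference between the trace of $\bar u_1$ on the skew facet $e_2$ and on the axis-aligned facet $e_1$ equals a $\bar x_1$-integral of $\partial_1\bar u_1$ by the fundamental theorem of calculus, which bounds the residual by $\norm{\pdv{\bar u_1}{\bar x_1}}_{0,\bar T}$ and thereby forces the individual diagonal derivatives $\norm{\pdv{\bar u_j}{\bar x_j}}_{0,\bar T}$, $j\neq i$, rather than the single term $\norm{\divbar\bvec u}_{0,\bar T}$, to appear. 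This is precisely the weakening observed for the Raviart--Thomas element in \cite[Proposition 4.4]{AcostaApelDuranLombardi2011}.
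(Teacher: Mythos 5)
Your case $i=1$ is correct and is essentially what the paper has in mind when it disposes of $i\in\{1,3\}$ in two sentences (``estimates of the type of \eqref{eq:estimate_ref_elem} by the same steps as in the proof of \autoref{lem:Estimate_reference_element}''): subtracting the $e_2$-trace $\bar u_2(\bar x_1,\bar x_1,\bar x_3)$, correcting with the skew weight $\bar x_2-\bar x_1$ (nonnegative on $\bar T$, so the weighted projection is well defined), and killing the single surviving facet $e_4$ with the weight $1-\bar x_2-\bar x_3$ are exactly the right adaptations, and your structural explanation of why $i=2$ cannot be treated this way (the facet sets $\{e_1,e_2\}$ and $\{e_3,e_4\}$ attached to components $1$ and $3$ are disjoint) is also correct. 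Note only that ``analogous'' for $i=3$ silently requires taking the $e_4$-trace of $\bar u_2$ rather than the $e_2$-trace, so that both residuals land on $e_2$.

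The genuine gap is the case $i=2$, which is the actual content of the lemma (and the only case the paper proves in detail), and which your proposal compresses into one sentence that does not stand as written. The surviving residual on $e_2$ is $\int_{e_2}\bar w_1 z$ with $\bar w_1=\bar v_1-\bar x_1 q_1$: your fundamental-theorem-of-calculus argument bounds the trace-difference part $\bar v_1|_{e_2}$ by $\norm{\pdv{\bar u_1}{\bar x_1}}_{0,\bar T}$, but it says nothing about the polynomial correction $\bar x_1 q_1|_{e_2}$, which needs a separate stability bound for the weighted projection (e.g.\ $\norm{q_1}_{0,\bar T}\lesssim\norm{\bar v_1}_{0,\bar T}\lesssim\norm{\pdv{\bar u_1}{\bar x_1}}_{0,\bar T}$, via norm equivalence and the defining moments); moreover, you never assemble the degrees of freedom of $(\bbdmi\bvec u)_2$ --- which on the facets $e_2$ and $e_4$ involve $\bar w_1$ and $\bar w_3$, not only $\bar u_2$ --- into the claimed inequality, so no chain of estimates for $i=2$ is actually produced. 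The paper resolves this case differently and without any trace/FTC argument: it integrates the $e_2$- and $e_4$-relations by parts so that the data become the partial divergences $\divbar(\bar w_1,\bar w_2,0)$ and $\divbar(0,\bar w_2,\bar w_3)$ plus facet terms of $\bar w_2=\bar u_2$ only, and then, instead of trying to annihilate (or keep) both residuals in a single identity, it runs two separate weighted-testing arguments, one per component, with the two different weights $1-\bar x_2-\bar x_3$ and $\bar x_2-\bar x_1$, yielding \eqref{eq:estimate_2} and \eqref{eq:estimate_3}, i.e.\ the componentwise bounds $\norm{\pdv{(\bar x_3 q_3)}{\bar x_3}}_{0,\bar T}\lesssim\norm{\pdv{\bar u_3}{\bar x_3}}_{0,\bar T}$ and $\norm{\pdv{(\bar x_1 q_1)}{\bar x_1}}_{0,\bar T}\lesssim\norm{\pdv{\bar u_1}{\bar x_1}}_{0,\bar T}$. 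Your FTC idea could probably be completed into an alternative proof by bounding the facet data $\norm{\bar w_1}_{0,e_2}$ and $\norm{\bar w_3}_{0,e_4}$ directly (FTC plus projection stability) and invoking boundedness of the interpolant in terms of its degrees of freedom, but as written the $i=2$ argument is a sketch with these steps missing, not a proof.
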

\begin{proof}
	Analogous to \cite[Lemma 4.3]{AcostaApelDuranLombardi2011}, we could for $i=1,3$ show estimates of the type of \eqref{eq:estimate_ref_elem} by the same steps as in the proof of  \autoref{lem:Estimate_reference_element}. These estimates lead clearly to the estimates \eqref{eq:estimate_ref_elem_norvp}. For the second component the stronger bound does not hold.
	Consider a function $\bvec{u}_* = (\bar{u}_1(0,\bar{x}_2,\bar{x}_3),0,\bar{u}_3(\bar{x}_1,\bar{x}_2, 0)^T$ and set
	\begin{equation}\label{eq:def_v}
		\bvec{v} = \bvec{u} - \bvec{u}_* = \begin{pmatrix} \bar{u}_1-\bar{u}_1(0,\bar{x}_2,\bar{x}_3) \\ \bar{u}_2 \\ \bar{u}_3-\bar{u}_3(\bar{x}_1,\bar{x}_2, 0) \end{pmatrix}.
	\end{equation}
	Then by \autoref{lem:bdm_zero_noRVP} we have $(\bbdmi\bvec{v})_2 = (\bbdmi\bvec{u})_2$ and $\divbar \bvec{v} = \divbar \bvec{u}$. Let $\bvec{v}_* = (\bar{x}_1 q_1, 0, \bar{x}_3 q_3)^T$ and 
	\begin{equation} \label{eq:def_w}
		\bvec{w} = \bvec{v} - \bvec{v}_* = \begin{pmatrix} \bar{v}_1 - \bar{x}_1 q_1 \\ \bar{v}_2 \\ \bar{v}_3 - \bar{x}_3 q_3 \end{pmatrix},
	\end{equation}
	where $q_1, q_3 \in P_{k-1}(\bar{T})$ are defined by
	\begin{align}
		\int_{\bar{T}} \bar{w}_1 z = \int_{\bar{T}} (\bar{v}_1 - \bar{x}_1 q_1) z = 0, \quad \forall z\in P_{k-1}(\bar{T}), \label{eq:q_1_norvp}\\
		\int_{\bar{T}} \bar{w}_3 z = \int_{\bar{T}} (\bar{v}_3 - \bar{x}_3 q_3) z = 0, \quad \forall z\in P_{k-1}(\bar{T}). \label{eq:q_3_norvp}
	\end{align}
	Since $\bvec{v}_* \in P_k(\bar{T})$, it holds $\bbdmi \bvec{v}_* = \bvec{v}_*$, and so $(\bbdmi \bvec{w})_1 = (\bbdmi \bvec{v})_1 = (\bbdmi \bvec{u})_1$. Now by \eqref{eq:BDM_dofs_1}, \eqref{eq:BDM_dofs_2}, and using \eqref{eq:q_1_norvp} and \eqref{eq:q_3_norvp}, $\bbdmi \bvec{w} = \bvec{p} = (\bar{p}_1,\bar{p}_2,\bar{p}_3)^T$ is defined by
	\begin{align*}
		\int_{e_1} \bar{p}_1 z &= \int_{e_1} \bar{w}_1 z = 0, &&\forall z \in P_k(e_1), \\
		\int_{e_2} (\bar{p}_1 - \bar{p}_2) z &= \int_{e_2} (\bar{w}_1 - \bar{w}_2) z \\
		&= \sqrt{2}\left(\int_{\bar{T}} \bar{w}_2 \pdv{z}{\bar{x}_2} + \int_{\bar{T}} \divbar (\bar{w}_1, \bar{w}_2,0)\right) - \int_{e_4} \bar{w}_2 z, &&\forall z \in P_k(e_2), \\
		\int_{e_3} \bar{p}_3 z &= \int_{e_1} \bar{w}_3 z = 0, &&\forall z \in P_k(e_3), \\
		\int_{e_4} (\bar{p}_2 + \bar{p}_3) z &= \int_{e_1} (\bar{w}_2 + \bar{w}_3) z  \\
		&= \sqrt{2}\left(\int_{\bar{T}} \bar{w}_2 \pdv{z}{\bar{x}_2} + \int_{\bar{T}} \divbar (0, \bar{w}_2, \bar{w}_3)\right) - \int_{e_2} \bar{w}_2 z, &&\forall z \in P_k(e_4),
	\end{align*}
	and
	\begin{align*}
		\int_{\bar{T}} \bar{p}_1 z &= \int_{\bar{T}} \bar{w}_1 z = 0, &&\forall z\in P_{k-2}(\bar{T}), \\
		\int_{\bar{T}} \bar{p}_2 z &= \int_{\bar{T}} \bar{w}_2 z, &&\forall z\in P_{k-2}(\bar{T}), \\
		\int_{\bar{T}} \bar{p}_3 z &= \int_{\bar{T}} \bar{w}_3 z = 0, &&\forall z\in P_{k-2}(\bar{T}), \\
		\int_{\bar{T}} \bvec{p} \cdot \vec{z} &= \int_{\bar{T}} \bvec{w}\cdot \vec{z} \\
		&= \int_{\bar{T}}(\bar{w}_1 z_1 + \bar{w}_2 z_2 + \bar{w}_3 z_3) = \int_{\bar{T}} \bar{w}_2 z, &&\forall \vec{z}\in \vec{S}_{k-1}(\bar{T}).
	\end{align*}
	This implies that we can estimate, analogously to \eqref{eq:bdmi_u_1},
	\begin{align*}
		\norm{(\bbdmi \bvec{u})_2}_{0,\bar{T}} = \norm{\bar{p}_2}_{0,\bar{T}} \lesssim \norm{\bar{w}_2}_{1,\bar{T}} + \norm{\divbar (0,\bar{w}_2, \bar{w}_3)}_{0,\bar{T}} + \norm{\divbar (\bar{w}_1, \bar{w}_2, 0)}_{0,\bar{T}},
	\end{align*}
	and now using \eqref{eq:def_v}, \eqref{eq:def_w} we get
	\begin{multline}\label{eq:estimate_1}
		\norm{(\bbdmi \bvec{u})_2}_{0,\bar{T}} \lesssim \\
			 \norm{\bar{u}_2}_{1,\bar{T}} + \norm{\pdv{\bar{u}_1}{\bar{x}_1}}_{0,\bar{T}} + \norm{\pdv{\bar{u}_3}{\bar{x}_3}}_{0,\bar{T}} 	+ \norm{\pdv{(\bar{x}_1 q_1)}{\bar{x}_1}}_{0,\bar{T}} + \norm{\pdv{(\bar{x}_3 q_3)}{\bar{x}_3}}_{0,\bar{T}}.
	\end{multline}
	We now have to estimate the last two terms. Observe that for all $z\in P_k(\bar{T})$
	\begin{equation*}
		0 = \int_{\bar{T}} \bar{w}_3 \pdv{z}{\bar{x}_3} = -\int_{\bar{T}} \pdv{\bar{w}_3}{\bar{x}_3} z + \int_{\partial \bar{T}} \bar{w}_3 n_3 z.
	\end{equation*}
	Choosing $z = (1-\bar{x}_2 - \bar{x}_3) \tilde{z}$, $\tilde{z} \in P_{k-1}(\bar{T})$ makes the boundary term vanish, so with the definition of $w_3$ we get
	\begin{equation*}
		\int_{\bar{T}} \pdv{(\bar{x}_3 q_3)}{\bar{x}_3} (1-\bar{x}_2 - \bar{x}_3) \tilde{z} = \int_{\bar{T}} \pdv{\bar{u}_3}{\bar{x}_3} (1-\bar{x}_2 - \bar{x}_3) \tilde{z},
	\end{equation*}
	which by similar considerations as in the proof of \autoref{lem:Estimate_reference_element} yields the estimate
	\begin{equation}\label{eq:estimate_2}
		\norm{\pdv{(\bar{x}_3 q_3)}{\bar{x}_3}}_{0,\bar{T}} \lesssim \norm{\pdv{\bar{u}_3}{\bar{x}_3}}_{0,\bar{T}}.
	\end{equation}
	Analogous steps get us
	\begin{equation}\label{eq:estimate_3}
		\norm{\pdv{(\bar{x}_1 q_1)}{\bar{x}_1}}_{0,\bar{T}} \lesssim \norm{\pdv{\bar{u}_1}{\bar{x}_1}}_{0,\bar{T}}.
	\end{equation}
	Combining \eqref{eq:estimate_1}, \eqref{eq:estimate_2} and \eqref{eq:estimate_3} yields the desired inequality.
\end{proof}

The transformation \eqref{eq:generaltransformation}, \eqref{eq:reference_family_transformation} on the reference element $\bar{T}$, yields an element $\tilde{T}$ of the reference family $\mathcal{T}_2$, see \autoref{fig:ReferenceTetrahedron_noRVP}, and gets us the following lemma.
\begin{lemma}\label{lem:transformed_element_MAC_noRVP}
	Let $\tilde{\vec{v}} \in \vec{H}^1(\tilde{T})$, where $\tilde{T} = J_{\tilde{T}} \bar{T}$. Then on the transformed element $\tilde{T}$ we have the estimate
	\begin{equation*}
		\norm{\tbdmi \tilde{\vec{v}}}_{0,\tilde{T}} \lesssim \sum_{\abs{\alpha}\leq 1} h^\alpha \norm{D^\alpha \tilde{\vec{v}}}_{0,\tilde{T}} + \sum_{i\in I_d} h_i \left( \sum_{j\in {}_i I_d} \norm{\pdv{\tilde{v}_j}{\tilde{x}_j}}_{0,\tilde{T}}\right)\lesssim \norm{\tilde{\vec{v}}}_{0,\tilde{T}} + h_{\tilde{T}} \abs{\tilde{\vec{v}}}_{1,\tilde{T}}.
	\end{equation*}
\end{lemma}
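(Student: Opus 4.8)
The plan is to mirror the proof of \autoref{lem:transformed_element_MAC+RVP} (which itself follows \cite[Proposition 3.4]{AcostaApelDuranLombardi2011}), the only change being that I invoke the weaker reference-element bound of \autoref{lem:Estimate_reference_element_noRVP} in place of \autoref{lem:Estimate_reference_element}. First I would record the scaling behaviour of the contra-variant Piola transform $\tilde{\vec{v}}(\tilde{\vec{x}}) = (\det J_{\tilde{T}})^{-1} J_{\tilde{T}} \bvec{v}(\bvec{x})$ with $J_{\tilde{T}} = \operatorname{diag}(h_1,\ldots,h_d)$. Exactly as in \eqref{eq:L2_norm_Piola} and \eqref{eq:H1_norm_Piola} one has $\norm{\tbdmi \tilde{\vec{v}}}_{0,\tilde{T}} \leq (\det J_{\tilde{T}})^{1/2} \sum_{i\in I_d} {}_i h^{-1} \norm{(\bbdmi \bvec{v})_i}_{0,\bar{T}}$ together with $(\det J_{\tilde{T}})^{1/2}\norm{\bar{v}_i}_{1,\bar{T}} = {}_i h \sum_{\abs{\alpha}\leq 1} h^\alpha \norm{D^\alpha \tilde{v}_i}_{0,\tilde{T}}$, using the shorthand ${}_i h = \prod_{j\in {}_i I_d} h_j$ from the text.

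The one genuinely new ingredient is the scaling of the extra terms $\norm{\pdv{\bar{v}_j}{\bar{x}_j}}_{0,\bar{T}}$ that appear in \eqref{eq:estimate_ref_elem_norvp}. Since $\bar{v}_j(\bvec{x}) = {}_j h\, \tilde{v}_j(\tilde{\vec{x}})$ and $\bar{x}_j = \tilde{x}_j/h_j$, the chain rule yields $\pdv{\bar{v}_j}{\bar{x}_j} = {}_j h\, h_j\, \pdv{\tilde{v}_j}{\tilde{x}_j} = \det J_{\tilde{T}}\, \pdv{\tilde{v}_j}{\tilde{x}_j}$, and the Jacobian factor in the change of variables in the integral contributes $(\det J_{\tilde{T}})^{-1/2}$, so that $\norm{\pdv{\bar{v}_j}{\bar{x}_j}}_{0,\bar{T}} = (\det J_{\tilde{T}})^{1/2} \norm{\pdv{\tilde{v}_j}{\tilde{x}_j}}_{0,\tilde{T}}$. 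I would stress that the two factors $h_j$ (from the coordinate change) and ${}_j h$ (from the Piola scaling of the $j$-th component) combine to the full determinant $\det J_{\tilde{T}}$, which is precisely what makes the anisotropic bookkeeping close.

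Inserting \autoref{lem:Estimate_reference_element_noRVP} and these three identities into $\norm{\tbdmi \tilde{\vec{v}}}_{0,\tilde{T}} \leq (\det J_{\tilde{T}})^{1/2} \sum_{i\in I_d} {}_i h^{-1}\norm{(\bbdmi\bvec{v})_i}_{0,\bar{T}}$, the $\norm{\bar{v}_i}_{1,\bar{T}}$ contribution reproduces the sum $\sum_{\abs{\alpha}\leq 1} h^\alpha \norm{D^\alpha \tilde{\vec{v}}}_{0,\tilde{T}}$ exactly as in \autoref{lem:transformed_element_MAC+RVP}. For each of the new derivative terms I would use the cancellation ${}_i h^{-1}(\det J_{\tilde{T}})^{1/2}(\det J_{\tilde{T}})^{1/2} = \det J_{\tilde{T}}/{}_i h = h_i$, which converts $\sum_{i\in I_d}{}_i h^{-1}\sum_{j\in {}_i I_d}\norm{\pdv{\bar{v}_j}{\bar{x}_j}}_{0,\bar{T}}$ into $\sum_{i\in I_d} h_i \sum_{j\in {}_i I_d}\norm{\pdv{\tilde{v}_j}{\tilde{x}_j}}_{0,\tilde{T}}$, giving the first claimed inequality. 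The second inequality is then immediate: every $h^\alpha$ with $\abs{\alpha}=1$ and every prefactor $h_i$ is bounded by $h_{\tilde{T}} = \max_i h_i$, the zeroth-order contribution is $\norm{\tilde{\vec{v}}}_{0,\tilde{T}}$, and each single first-derivative norm is controlled by $\abs{\tilde{\vec{v}}}_{1,\tilde{T}}$.

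I do not anticipate a substantive obstacle, since the overall structure is dictated by \autoref{lem:transformed_element_MAC+RVP}; the only point demanding care is the determinant-factor cancellation in the new derivative terms described above, where a miscounting of a single power of $h_j$ would produce a spurious aspect-ratio factor and destroy the anisotropy of the estimate.
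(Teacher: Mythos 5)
Your proposal is correct and is precisely the paper's proof: the paper disposes of this lemma in one sentence, stating that it is analogous to the proof of \autoref{lem:transformed_element_MAC+RVP} with \autoref{lem:Estimate_reference_element_noRVP} replacing \autoref{lem:Estimate_reference_element}. Your detailed verification of the scaling of the extra terms, in particular the cancellation ${}_i h^{-1}\det J_{\tilde{T}} = h_i$ that produces the anisotropic prefactors, correctly fills in exactly the bookkeeping the paper leaves implicit.
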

\begin{proof}
	The proof is analogous to that of \autoref{lem:transformed_element_MAC+RVP}, where instead of \autoref{lem:Estimate_reference_element} we use \autoref{lem:Estimate_reference_element_noRVP}.
\end{proof}

\begin{remark}
	It was mentioned in \cite[Remark 4.1]{AcostaApelDuranLombardi2011}, that for the Raviart-Thomas interpolation, estimates like \eqref{eq:estimate_ref_elem} and \eqref{eq:transformed_element_MAC} could be reached for the components $i\in\{1,3\}$. This is also possible for the Brezzi-Douglas-Marini interpolant, but in order to get to the better estimate on the general element, we would require this estimate for all components.
\end{remark}

\begin{theorem}\label{th:stability_MAC}
	Let $T$ be an element satisfying a maximum angle condition $\MAC(\bar{\phi})$. Then for $\vec{v} \in \vec{H}^1(T)$ the estimate
	\begin{equation}\label{eq:stability_MAC}
		\norm{\bdmi \vec{v}}_{0,T} \lesssim \norm{\vec{v}}_{0,T} + h_T \sum_{j\in I_d} \norm{\pdv{\vec{v}}{x_j}}_{0,T}
	\end{equation}
	holds, where the constant only depends on $\bar{\phi}$.
\end{theorem}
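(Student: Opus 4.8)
The plan is to mirror the proof of \autoref{th:stability_RVP} almost verbatim, the only changes being that \autoref{lem:RVP} is replaced by \autoref{lem:MAC} and that the reference element is now allowed to come from either family $\mathcal{T}_1$ or $\mathcal{T}_2$. Since $T$ satisfies $\MAC(\bar\phi)$, \autoref{lem:MAC} furnishes an element $\tilde T \in \mathcal{T}_1 \cup \mathcal{T}_2$ together with an affine map $F(\tilde{\vec x}) = J_T\tilde{\vec x} + \vec x_0$ taking $\tilde T$ onto $T$ and satisfying $\norm{J_T}_\infty, \norm{J_T^{-1}}_\infty \leq C$ with $C$ depending only on $\bar\phi$. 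I then let $\vec v$ be the contra-variant Piola transform of $\tilde{\vec v}\in\vec H^1(\tilde T)$, namely $\vec v(\vec x) = (\det J_T)^{-1} J_T \tilde{\vec v}(\tilde{\vec x})$, and use the affine equivalence of the Brezzi--Douglas--Marini interpolation under the Piola transform (as already invoked in \autoref{th:stability_RVP}) to identify $\bdmi \vec v$ with the Piola transform of $\tbdmi \tilde{\vec v}$.

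The middle step is to establish on $\tilde T$ the single isotropic-looking bound
\begin{equation*}
	\norm{\tbdmi \tilde{\vec v}}_{0,\tilde T} \lesssim \norm{\tilde{\vec v}}_{0,\tilde T} + h_{\tilde T}\, \abs{\tilde{\vec v}}_{1,\tilde T},
\end{equation*}
valid no matter which family $\tilde T$ belongs to. If $\tilde T \in \mathcal{T}_1$ this follows from \autoref{lem:transformed_element_MAC+RVP} after coarsening the right-hand side via $h^\alpha \leq h_{\tilde T}^{\abs{\alpha}}$ and $\norm{\divtilde \tilde{\vec v}}_{0,\tilde T} \leq \abs{\tilde{\vec v}}_{1,\tilde T}$; if $\tilde T \in \mathcal{T}_2$ it is precisely the second inequality in the chain of \autoref{lem:transformed_element_MAC_noRVP}. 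Having a uniform reference-family estimate in both cases is what allows the two-sided case distinction to be closed cleanly.

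Finally I transform back to $T$ using the standard scaling relations for the Piola transform under $F$, exactly as in \autoref{th:stability_RVP}. Here the $L^2$ norms scale by $(\det J_T)^{\mp 1/2}$ together with matrix factors bounded by $\norm{J_T}_\infty$ and $\norm{J_T^{-1}}_\infty$, the mesh parameters satisfy $h_{\tilde T} = \norm{J_T^{-1}}_\infty h_T$, and the seminorm transforms by the chain rule
\begin{equation*}
	\pdv{\tilde{\vec v}}{\tilde x_j} = (\det J_T)\, J_T^{-1} \sum_{k\in I_d} (J_T)_{kj}\, \pdv{\vec v}{x_k}.
\end{equation*}
Collecting these, all powers of $\det J_T$ cancel and every surviving factor is bounded by $C(\bar\phi)$, which yields
\begin{equation*}
	\norm{\bdmi \vec v}_{0,T} \lesssim \norm{\vec v}_{0,T} + h_T \sum_{j\in I_d} \norm{\pdv{\vec v}{x_j}}_{0,T}
\end{equation*}
with a constant depending only on $\bar\phi$.

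I do not expect a genuinely hard step, since all the delicate analysis was already carried out in \autoref{lem:Estimate_reference_element_noRVP} and packaged into \autoref{lem:transformed_element_MAC_noRVP}. The only points requiring care are, first, the case distinction $\tilde T \in \mathcal{T}_1$ versus $\mathcal{T}_2$ and the observation that both produce the same bound $\norm{\tilde{\vec v}}_{0,\tilde T} + h_{\tilde T}\abs{\tilde{\vec v}}_{1,\tilde T}$; and second, the bookkeeping in the displayed chain rule. Because $J_T$ is now a general (non-diagonal) matrix—unlike the diagonal $J_{\tilde T}$ used in \autoref{th:stability_RVP}, for which $J_T\vec e_i = \vec l_i$ preserved the edge-wise directional structure—the anisotropic directional derivatives collapse into the full gradient $\sum_j \norm{\pdv{\vec v}{x_j}}_{0,T}$ weighted by the single factor $h_T$. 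This loss of the directional, per-edge scaling is exactly why \eqref{eq:stability_MAC} is weaker than \eqref{eq:stability_RVP}, and it is the only substantive difference between the two theorems.
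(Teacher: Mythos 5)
Your proof is correct and follows essentially the same route as the paper's: \autoref{lem:MAC} reduces to the reference families, the reference-family stability lemmas supply the bound on $\tilde{T}$, and the Piola scaling with $\norm{J_T}_{\infty},\norm{J_T^{-1}}_{\infty}\leq C(\bar{\phi})$ transports it to $T$. The only organizational difference is that for $\tilde{T}\in\mathcal{T}_1$ the paper simply invokes \autoref{th:stability_RVP} (whose right-hand side is easily bounded by that of \eqref{eq:stability_MAC}), whereas you coarsen \autoref{lem:transformed_element_MAC+RVP} to the uniform bound $\norm{\tilde{\vec{v}}}_{0,\tilde{T}}+h_{\tilde{T}}\abs{\tilde{\vec{v}}}_{1,\tilde{T}}$ and treat both families through a single pullback.
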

\begin{proof}
	We follow the steps from the proof of \cite[Theorem 4.1]{AcostaApelDuranLombardi2011}. The difference to the proof of \autoref{th:stability_RVP} is, that on reference family $\mathcal{T}_2$ only the weaker stability estimate holds, so similarly we only get a weaker estimate on the general element. Also instead of the directional derivatives, we now use the standard partial derivatives.
	
	Following from \autoref{lem:MAC}, there is an element $\tilde{T}\in \mathcal{T}_1\cup\mathcal{T}_2$ and an affine mapping $\tilde{\vec{x}} \mapsto J_T \tilde{\vec{x}} + \vec{x}_0$, $\norm{J_T}_\infty, \norm{J_T^{-1}}_\infty \leq C$, so that $\tilde{T}$ is mapped onto $T$. For a simpler notation we assume $\vec{x}_0 = 0$. If $\tilde{T}\in\mathcal{T}_1$, then T satisfies a regular vertex property with a constant only dependent on $\bar{\phi}$ so that \autoref{th:stability_RVP} applies, so we assume $\tilde{T}\in\mathcal{T}_2$.
	Using the definition of the Piola transform,
	\begin{equation*}
		\vec{v}(\vec{x}) = \frac{1}{\det J_T} J_T \tilde{\vec{v}}(\tilde{\vec{x}}), \quad \bdmi \vec{v} (\vec{x}) = \frac{1}{\det J_T} J_T \tbdmi \tilde{\vec{v}}(\tilde{\vec{x}}),  \quad \vec{x} = J_T \tilde{\vec{x}},
	\end{equation*} 
	\autoref{lem:transformed_element_MAC_noRVP} and changing variables, we can calculate
	\begin{align*}
		\norm{\bdmi \vec{v}}_{0,T} &\lesssim \frac{\norm{J_T}_\infty}{(\det J_T)^{\nicefrac{1}{2}}} \norm{\tbdmi \tilde{\vec{v}}}_{0,\tilde{T}} \\
		&\lesssim \frac{\norm{J_T}_\infty}{(\det J_T)^{\nicefrac{1}{2}}}  \left( (\det J_T)^{\nicefrac{1}{2}} \norm{J_T^{-1}}_\infty \norm{\vec{v}}_{0,T} + h_T \sum_{i,j\in I_d} \norm{\pdv{\tilde{v_i}}{\tilde{x}_j}}_{0,\tilde{T}} \right) \\
		&\lesssim \norm{J_T}_\infty \norm{J_T^{-1}}_\infty \left( \norm{\vec{v}}_{0,T} + h_T \norm{J_T}_\infty \sum_{i,j\in I_d} \norm{\pdv{v_i}{x_j}}_{0,T} \right). \qedhere
	\end{align*}
\end{proof}

\begin{remark}
	Instead of the directional derivatives, that were used in \autoref{th:interpolation_error_RVP}, we use the standard partial derivatives here, since handling these is easier and more natural.	
	For the anisotropic type of estimates, different paradigms are available. The one used in \autoref{th:stability_RVP} employs directional derivatives, as seen also in \cite{AcostaApelDuranLombardi2011}, while a different approach uses element independent coordinate systems and a condition relating the location of the element to the coordinate system, see \cite{Apel1999}. In either case, the element size parameters need to be defined carefully and these definitions are not necessarily the same for the two systems.
\end{remark}

\begin{example}[Necessity of the maximum angle condition]\label{subsec:necessity_MAC}
We show that estimate \eqref{eq:stability_MAC} can not be achieved without a maximum angle condition. Consider the triangle $T^*$ pictured in \autoref{fig:triangle_no_MAC}. For a decreasing parameter $h$, the interior angle at $\vec{p}_2$ gets arbitrarily close to $\pi$, thus the family of triangles does not satisfy $\MAC(\bar{\phi})$ for any $\bar{\phi}$.
\begin{figure}[t]
	\centering
	\includegraphics{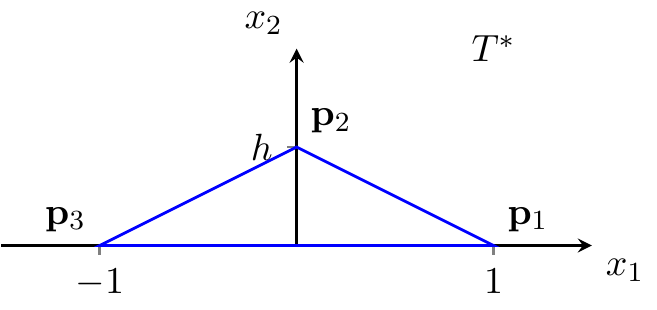}
	\caption{Family of triangles not satisfying a maximum angle condition for $h\rightarrow 0$}\label{fig:triangle_no_MAC}
\end{figure}

Let the function $\vec{v}\in L^2(T^*)$ be defined by $\vec{v}(\vec{x}) = (0, x_1^2)$. Then one can compute its Brezzi-Douglas-Marini interpolant on $T^*$ as $\bdmiorder{1} \vec{v}(\vec{x}) = (\frac{1}{2h} x_1, -\frac{1}{2h}x_2 + \frac{1}{3})^T$.
Calculating the individual terms
\begin{alignat*}{2}
	\norm{\bdmi \vec{v}}_{0,T^*} &= \sqrt{\frac{1}{24h}+\frac{h}{24}} &&\underset{h \rightarrow 0}{\longrightarrow} \infty,\\
	\norm{v_2}_{0,T^*} &= \sqrt{\frac{h}{15}} &&\underset{h \rightarrow 0}{\longrightarrow} 0,\\
	\norm{\pdv{v_2}{x_1}}_{0,T^*} &= \sqrt{\frac{2}{3}h}  &&\underset{h \rightarrow 0}{\longrightarrow} 0,
\end{alignat*}
we can see that, with an increasing aspect ratio and increasing interior angle at $\vec{p}_2$, the stability estimate from \autoref{th:stability_MAC},
\begin{equation*}
	\norm{\bdmi \vec{v}}_{0,T^*} \lesssim \norm{\vec{v}}_{0,T^*} + \sum_{i,j\in I_d} \norm{\pdv{v_i}{x_j}}_{0,T^*} = \norm{v_2}_{0,T^*} + \norm{\pdv{v_2}{x_1}}_{0,T^*},
\end{equation*}
does not hold. Hence the maximum angle condition is a necessary condition for the theorem.
\end{example}

\section{Interpolation error estimates}\label{sec:interpolation_estimates}
Before getting to the interpolation error estimates, we establish a lemma of Deny-Lions or Bramble-Hilbert type for elements satisfying a regular vertex property. As a necessary prerequisite, we state the following result without proof. It is based on a more general result from \cite{DupontScott1980}, and can be found in \cite[Lemma 2.1]{Apel1999}.
\begin{lemma}
	Let $A\subset \R^d$ be a connected set which is star-shaped with respect to a ball $B\subset A$. Let $\gamma$ be a multi-index with $\abs{\gamma} \leq k$ and $v \in H^{m+1} (A)$, $m,k\in\N$, $0\leq k \leq m+1$. Then there is a polynomial $w\in P_{m}$ so that
	\begin{equation}\label{eq:deny_lions}
		\norm{D^\gamma (v-w)}_{m+1-k,A} \lesssim \abs{D^\gamma v}_{m+1-k,A}
	\end{equation}
	holds. The constant depends only on $d$, $m$, $\diam A$, and $\diam B$. The polynomial $w$ depends only on $m$, $v$, $B$, but not on $\gamma$.
\end{lemma}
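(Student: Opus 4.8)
The plan is to take for $w$ the averaged Taylor polynomial of $v$ of degree $m$, the standard construction behind the Dupont--Scott theory \cite{DupontScott1980}, whose decisive feature here is that it does not depend on $\gamma$. Fix once and for all a cut-off $\phi \in C_0^\infty(B)$ with $\int_B \phi = 1$, denote by $T^m_{\vec{y}} v$ the Taylor polynomial of degree $m$ of $v$ about a point $\vec{y} \in B$, and set
\begin{equation*}
	w = \Pi_m v := \int_B \phi(\vec{y})\, (T^m_{\vec{y}} v)(\cdot)\, \dd\vec{y} \in P_m .
\end{equation*}
Because $A$ is star-shaped with respect to $B$, the segment from any $\vec{y}\in B$ to any $\vec{x}\in A$ lies in $A$, so $\Pi_m v$ is well defined for $v \in H^{m+1}(A)$; by construction it depends only on $m$, $v$ and $B$, and not on $\gamma$, exactly as the statement demands.

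I would then invoke two classical properties of $\Pi_\ell$. The first is the commutation identity $D^\gamma \Pi_m v = \Pi_{m-|\gamma|}(D^\gamma v)$ for $|\gamma|\le m$ (for the borderline $|\gamma|=m+1$, possible only when $k=m+1$, one has $D^\gamma w = 0$ and the inequality is immediate). The second is the Dupont--Scott estimate, which applied to $u := D^\gamma v \in H^{m+1-|\gamma|}(A)$ yields
\begin{equation*}
	\norm{u - \Pi_{m-|\gamma|} u}_{s,A} \lesssim \abs{u}_{m+1-|\gamma|,A}, \qquad 0 \le s \le m+1-|\gamma|,
\end{equation*}
with a constant depending only on $d$, $m$, $\diam A$ and $\diam B$. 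Combining the two gives $\norm{D^\gamma(v-w)}_{s,A}\lesssim \abs{D^\gamma v}_{m+1-|\gamma|,A}$, and the hypothesis $|\gamma|\le k\le m+1$ is precisely what guarantees that the target order $s=m+1-k$ lies in the admissible range $0\le s\le m+1-|\gamma|$. For $|\gamma|=k$ the two seminorm orders coincide and one reads off \eqref{eq:deny_lions} directly; for $|\gamma|<k$ the estimate delivers the higher seminorm $\abs{D^\gamma v}_{m+1-|\gamma|,A}$, and reconciling this with the order $m+1-k$ displayed in \eqref{eq:deny_lions} is the point that must be handled with care, using that the lower-order discrepancy is governed by the vanishing of the moments $\int_B D^\beta(v-w)$ up to order $m$.

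The part I expect to be genuinely delicate is therefore not any single estimate but the requirement that one and the same polynomial serve every admissible $\gamma$ while keeping the right-hand seminorm order uniform: this is what forces the use of the commutation identity, since it is exactly this identity that lets the single averaged Taylor polynomial $w$, after differentiation, reproduce the averaged Taylor polynomial of each $D^\gamma v$ and so meet an estimate of the form \eqref{eq:deny_lions} for all $|\gamma|\le k$ simultaneously. Verifying the commutation identity (an integration-by-parts and index-relabelling computation inside the definition of $\Pi_\ell$) and tracking that the mapping properties $\Pi_\ell\colon H^\ell(A)\to P_\ell$ keep every constant dependent only on $d$, $m$, $\diam A$ and $\diam B$ are the routine but essential bookkeeping steps. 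Since the result is quoted from \cite{DupontScott1980} and \cite{Apel1999}, I would cite these for the two classical properties rather than reproduce their proofs.
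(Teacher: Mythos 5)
Your plan follows the standard route behind the cited result (the paper itself states this lemma without proof, pointing to Dupont--Scott and to Lemma~2.1 of Apel's book): the averaged Taylor polynomial $w=\Pi_m v$, which depends only on $m$, $v$, $B$, together with the commutation identity $D^\gamma \Pi_m v=\Pi_{m-\abs{\gamma}}(D^\gamma v)$; your treatment of the cases $\abs{\gamma}=k$ and $\abs{\gamma}=m+1$ is correct. The genuine gap is the case $\abs{\gamma}<k$, and it is not routine bookkeeping. The version of the Dupont--Scott estimate you invoke always carries the seminorm of order $\ell+1$ on the right, where $\ell=m-\abs{\gamma}$ is the polynomial degree; it therefore yields $\norm{D^\gamma(v-w)}_{m+1-k,A}\lesssim\abs{D^\gamma v}_{m+1-\abs{\gamma},A}$, and for $\abs{\gamma}<k$ the seminorm $\abs{D^\gamma v}_{m+1-\abs{\gamma},A}$ (order-$(m+1)$ derivatives of $v$) is not comparable to the claimed $\abs{D^\gamma v}_{m+1-k,A}$: an oscillatory $v$ makes the former arbitrarily large while the latter stays bounded, so your estimate does not imply \eqref{eq:deny_lions}. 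The proposed repair via ``vanishing of the moments $\int_B D^\beta(v-w)$'' does not close this: the averaged Taylor polynomial is not guaranteed to annihilate such moments, and even for the polynomial that is \emph{defined} by these moment conditions one still faces the obstacle that $D^\gamma w$ has nonvanishing derivatives of order $m+1-k$ (since $\abs{\gamma}+m+1-k\le m$), which a Poincar\'e-type iteration cannot bound by $\abs{D^\gamma v}_{m+1-k,A}$ alone.

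What closes the gap is the sharper form of the Dupont--Scott estimate, in which the degree of the averaged Taylor polynomial may exceed the smoothness order: for $u\in H^s(A)$ and $\ell\ge s-1$,
\begin{equation*}
	\norm{u-\Pi_\ell u}_{s,A}\lesssim\abs{u}_{s,A},
\end{equation*}
with constant depending only on $d$, $\ell$, $\diam A$ and $\diam B$. This holds because the terms of $\Pi_\ell u$ of degree $\ge s$ can be integrated by parts inside the average over $B$, moving the excess derivatives onto $\phi(y)(x-y)^\alpha$, so that only derivatives of $u$ of order exactly $s$ on $B$ remain; adding the standard estimate for $\Pi_{s-1}$ then gives the bound with only $\abs{u}_{s,A}$ on the right. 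Applying this with $u=D^\gamma v$, $s=m+1-k$ and $\ell=m-\abs{\gamma}$ --- the hypothesis $\abs{\gamma}\le k$ is exactly the admissibility condition $s\le\ell+1$ --- produces \eqref{eq:deny_lions} for all admissible $\gamma$ simultaneously, with one and the same $w$ and the stated dependence of the constant.
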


We use this lemma on a reference element, where the dependencies of the constant are clearly bounded, and then work with the same transformations as before to get to a general element. The following lemma is mainly \cite[Lemma 6.1]{AcostaApelDuranLombardi2011}.
\begin{lemma}\label{lem:interpolation}
	Let $T$ be an element satisfying $\RVP(\bar{c})$, with regular vertex $\vec{p}_{d+1}$, and $\vec{l}_i$, $h_i$ the vectors and element size parameters from \autoref{def:regular_vertex_property}. Then for $\vec{v}\in \vec{H}^{m+1}(T)$, $m\geq 0$ there is a $\vec{w} \in \vec{P}_m (T)$, so that the estimates	
	\begin{align*}
		\norm{\vec{v}-\vec{w}}_{0,T} &\lesssim \sum_{\abs{\alpha}=m+1} h^\alpha \norm{\pdv{\vec{v}}{\vec{l}^\alpha}}_{0,T},\\
		\norm{\pdv{(\vec{v}-\vec{w})}{\vec{l}_1}}_{0,T} &\lesssim \sum_{\abs{\alpha}=m} h^\alpha \norm{\frac{\partial^{m+1}\vec{v}}{\partial\vec{l}_1^{\alpha_1 + 1} \partial\vec{l}_2^{\alpha_2} \ldots \partial\vec{l}_d^{\alpha_d}}}_{0,T}
	\end{align*}
	and analogous estimates for $\pdv{(\vec{u}-\vec{w})}{\vec{l}_i}$, $i\in {}_1 I_d$, hold. Additionally the estimate
	\begin{equation*}
		\norm{\divt (\vec{v}-\vec{w})}_{0,T} \lesssim h_T^m \norm{D^m \divt \vec{v}}_{0,T}
	\end{equation*}
	holds, where $D^m f$ is the sum of the absolute values of all derivatives of order $m$ of $f$.
\end{lemma}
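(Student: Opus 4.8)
The plan is to transport everything to the reference element $\hat T$, where the Deny--Lions estimate \eqref{eq:deny_lions} holds with a constant that does not degenerate, and to exploit the fact that the directional derivatives $\pdv{}{\vec{l}_i}$ become scaled \emph{partial} derivatives on $\hat T$. By \autoref{lem:RVP} there is $\tilde T\in\mathcal T_1$ and an affine map with bounded $J_T,J_T^{-1}$ sending $\tilde T$ onto $T$ with $J_T\vec{e}_i=\vec{l}_i$; composing with the diagonal scaling \eqref{eq:reference_family_transformation} yields an affine map $\vec{x}=B\hat{\vec{x}}$, $B=J_TJ_{\tilde T}$, from $\hat T$ onto $T$ whose $i$-th column is $h_i\vec{l}_i$. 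For the scalar pull-back $\hat{\vec{v}}=\vec{v}\circ B$ the chain rule gives $\pdv{\hat{\vec{v}}}{\hat x_i}=h_i\big(\pdv{\vec{v}}{\vec{l}_i}\big)\circ B$, hence $D^\alpha_{\hat{\vec{x}}}\hat{\vec{v}}=h^\alpha\,(D^\alpha_{\vec{l}}\vec{v})\circ B$. Since $\abs{\det B}$ is comparable to $\prod_ih_i$ (with constants depending only on $\bar c$), the Jacobian factors from the $L^2$-norm transformation will cancel against the weights $h^\alpha$ in every estimate.

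Next I would fix $\vec{w}$ once and for all as the componentwise averaged Taylor polynomial $\hat{\vec{w}}=Q^m\hat{\vec{v}}\in\vec P_m(\hat T)$ underlying \eqref{eq:deny_lions}, and set $\vec{w}=\hat{\vec{w}}\circ B^{-1}\in\vec P_m(T)$. Applying \eqref{eq:deny_lions} with $\gamma=0$, $k=0$ gives $\norm{\hat{\vec{v}}-\hat{\vec{w}}}_{0,\hat T}\lesssim\abs{\hat{\vec{v}}}_{m+1,\hat T}\lesssim\sum_{\abs\alpha=m+1}\norm{D^\alpha_{\hat{\vec{x}}}\hat{\vec{v}}}_{0,\hat T}$, while applying it with $\gamma=\vec{e}_i$, $k=1$ gives $\norm{\pdv{(\hat{\vec{v}}-\hat{\vec{w}})}{\hat x_i}}_{0,\hat T}\lesssim\sum_{\abs\beta=m}\norm{D^{\beta+\vec{e}_i}_{\hat{\vec{x}}}\hat{\vec{v}}}_{0,\hat T}$. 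It is essential that \eqref{eq:deny_lions} produces the \emph{same} polynomial for every $\gamma$, so that all these bounds hold for one $\vec{w}$. Transporting the norms back to $T$ and inserting $D^\alpha_{\hat{\vec{x}}}\hat{\vec{v}}=h^\alpha(D^\alpha_{\vec{l}}\vec{v})\circ B$ turns each reference estimate into the asserted one, the factor $\abs{\det B}^{1/2}$ from the norm transformation cancelling the $\abs{\det B}^{-1/2}$ produced on the right; the surviving weights are exactly the $h^\alpha$, and in the derivative estimate the common factor $h_i$ drops out on both sides.

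The delicate point is the divergence estimate, which must be isotropic (factor $h_T^m$) and must retain only $\divt\vec{v}$, not the individual components, on the right. Since $B^{-1}$ is constant and $Q^m$ is linear and componentwise, $B^{-1}\hat{\vec{w}}=Q^m(B^{-1}\hat{\vec{v}})$; writing $\hat{\vec{g}}=B^{-1}\hat{\vec{v}}$ one checks $\divt\vec{w}=\big(\divhat(B^{-1}\hat{\vec{w}})\big)\circ B^{-1}$ and likewise for $\vec{v}$, so that $\divt(\vec{v}-\vec{w})=\big(\divhat(\hat{\vec{g}}-Q^m\hat{\vec{g}})\big)\circ B^{-1}$ with $\divhat\hat{\vec{g}}=(\divt\vec{v})\circ B$. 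The construction of $Q^m$ commutes with differentiation up to a reduction of degree, in particular $\divhat(Q^m\hat{\vec{g}})=Q^{m-1}(\divhat\hat{\vec{g}})$; hence $\divhat(\hat{\vec{g}}-Q^m\hat{\vec{g}})$ is precisely the Deny--Lions remainder of the \emph{scalar} function $\divhat\hat{\vec{g}}$, which \eqref{eq:deny_lions} (with $m-1$ in place of $m$, $\gamma=0$) bounds by $\abs{\divhat\hat{\vec{g}}}_{m,\hat T}$. Finally the chain rule applied to $\divhat\hat{\vec{g}}=(\divt\vec{v})\circ B$ produces, for each order-$m$ reference derivative, products of $m$ entries of $B$, each bounded by $h_T$, giving $\abs{\divhat\hat{\vec{g}}}_{m,\hat T}\lesssim h_T^m\,\abs{\det B}^{-1/2}\norm{D^m\divt\vec{v}}_{0,T}$; transporting the norm back cancels $\abs{\det B}$ once more and yields $\norm{\divt(\vec{v}-\vec{w})}_{0,T}\lesssim h_T^m\norm{D^m\divt\vec{v}}_{0,T}$. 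I expect this divergence step to be the main obstacle, since it is the only place where the precise commutation property of the averaged Taylor operator is needed instead of the black-box statement \eqref{eq:deny_lions}, and where the chain-rule bookkeeping must be arranged so that only $\divt\vec{v}$ survives on the right-hand side.
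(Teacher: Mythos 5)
Your proposal is correct and follows essentially the same route as the paper: reduce to the reference element $\hat T$ via the $\RVP$ transformation of \autoref{lem:RVP}, apply the Deny--Lions estimate \eqref{eq:deny_lions} componentwise there (with one polynomial serving all $\gamma$), and transform back so that the Jacobian factors cancel against the weights $h^\alpha$ --- indeed, your scalar pull-back combined with the constant matrix $B^{-1}$ yields literally the same polynomial $\vec{w}$ as the paper's Piola-transform bookkeeping. The only portion the paper does not detail, the divergence bound, which it defers to \cite{AcostaApelDuranLombardi2011}, you establish via the commutation property $\divhat(Q^m\hat{\vec{g}})=Q^{m-1}(\divhat\hat{\vec{g}})$ of the averaged Taylor operator, which is exactly the argument used in that reference, so your treatment is, if anything, more self-contained.
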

\begin{proof}
	We detail the proof of the first estimate, since it is not explicitly given in \cite{AcostaApelDuranLombardi2011}, and refer to the reference for the two other analogous proofs.
	
	For a simpler notation assume again $\vec{p}_{d+1} = 0$. We then know from \autoref{lem:RVP}, that there is a linear transformation with matrix $J_T$, so that an element $\tilde{T}\in\mathcal{T}_1$ gets mapped to $T$, and that $\norm{J_T}_\infty, \norm{J_T^{-1}}_\infty \leq C(\bar{c})$.
	
	Choosing $\gamma = (0,0,0)$, $k=0$ and using \eqref{eq:deny_lions} on the reference element $\hat{T}$ we get for $i\in I_d$
	\begin{align}
		\norm{\hat{{v}}_i-\hat{{w}}_i}_{0,\hat{T}} \lesssim\norm{\hat{{v}}_i-\hat{{w}}_i}_{m+1,\hat{T}} \lesssim \sum_{\abs{\alpha}=m+1} \norm{\frac{\partial^{m+1} \hat{{v}}_i}{\partial \hat{x}_1^{\alpha_1} \ldots \partial \hat{x}_d^{\alpha_d}}}_{0,\hat{T}},\label{eq:bramble_hilbert_1}
	\end{align}	
	The transformation onto the element of the reference family $\tilde{T}$ yields the functions
	\begin{align*}
		\hat{v}_i = \det(J_{\tilde{T}}) \frac{1}{h_i} \tilde{v}_i, \quad \hat{w}_i = \det(J_{\tilde{T}}) \frac{1}{h_i} \tilde{w}_i, \quad \pdv{\hat{v}_i}{\hat{x}_j} = \det(J_{\tilde{T}}) \frac{1}{h_i} \pdv{\tilde{v}_i}{\tilde{x}_j} h_j.
	\end{align*}
	Combining these relations with \eqref{eq:bramble_hilbert_1} we get
	\begin{equation*}
		\norm{\tilde{v}_i-\tilde{w}_i}_{0,\tilde{T}} \lesssim \sum_{\abs{\alpha}=m+1} h^\alpha \norm{\frac{\partial^{m+1} \tilde{v}_i}{\partial \tilde{x}_1^{\alpha_1} \ldots \partial \tilde{x}_d^{\alpha_d}}}_{0,\tilde{T}},
	\end{equation*}
	and thus
	\begin{equation*}
		\norm{\tilde{\vec{v}}-\tilde{\vec{w}}}_{0,\tilde{T}} \lesssim \sum_{\abs{\alpha}=m+1} h^\alpha \norm{\frac{\partial^{m+1} \tilde{\vec{v}}}{\partial \tilde{x}_1^{\alpha_1} \ldots \partial \tilde{x}_d^{\alpha_d}}}_{0,\tilde{T}},
	\end{equation*}
	where $\tilde{\vec{v}} \in \vec{H}^{m+1}(\tilde{T})$, $\tilde{\vec{w}}\in \vec{P}_m (\tilde{T})$ are the Piola transforms of $\vec{v}\in\vec{H}^{m+1}(T)$, $\vec{w}\in \vec{P}_m (T)$, defined by
	\begin{equation*}
		\vec{v}(\vec{x}) = \frac{1}{\det J_T} J_T \tilde{\vec{v}}(\tilde{\vec{x}}), \quad 		\vec{w}(\vec{x}) = \frac{1}{\det J_T} J_T \tilde{\vec{w}}(\tilde{\vec{x}}),\quad \vec{x} = J_T \tilde{\vec{x}}.
	\end{equation*}
	Using these definitions we get the first estimate.
\end{proof}

Finally, we can now prove the local interpolation error estimates. The proof is identical to the proof of \cite[Theorem 6.2]{AcostaApelDuranLombardi2011}.
\begin{theorem}\label{th:interpolation_error_RVP}
	Let $k\geq 1$, and let $T$ be an element satisfying $\RVP(\bar{c})$, with regular vertex $\vec{p}_{d+1}$, and $\vec{l}_i$, $h_i$ the vectors and element size parameters from \autoref{def:regular_vertex_property}. Then for $0\leq m \leq k$,  $\vec{v}\in \vec{H}^{m+1}(T)$ the estimate
	\begin{equation}\label{eq:interpolation_error_RVP}
		\norm{\vec{v}-\bdmi\vec{v}}_{0,T} \lesssim \sum_{\abs{\alpha} = m + 1} h^\alpha \norm{D^\alpha_{\vec{l}}\vec{v}}_{0,T} + h_T^{m+1} \norm{D^m\divt \vec{v}}_{0,T}
	\end{equation}
	holds, where the constant only depends on $\bar{c}$ and $k$, and $D^\alpha_{\vec{l}} = \frac{\partial^{\abs{\alpha}}}{\partial \vec{l}_1^{\alpha_1}\ldots \partial \vec{l}_d^{\alpha_d}}$.
\end{theorem}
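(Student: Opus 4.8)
The plan is to combine the polynomial reproduction property of $\bdmi$, the stability estimate of \autoref{th:stability_RVP}, and the Bramble-Hilbert bounds of \autoref{lem:interpolation} in the classical way. The starting point is that the Brezzi-Douglas-Marini interpolation is a projection onto $\vec{P}_k(T)$, so since $m\leq k$ every polynomial $\vec{w}\in\vec{P}_m(T)\subseteq\vec{P}_k(T)$ is reproduced, $\bdmi\vec{w}=\vec{w}$. By linearity this gives, for any such $\vec{w}$,
\begin{equation*}
	\vec{v}-\bdmi\vec{v} = (\vec{v}-\vec{w}) - \bdmi(\vec{v}-\vec{w}),
\end{equation*}
and hence, by the triangle inequality,
\begin{equation*}
	\norm{\vec{v}-\bdmi\vec{v}}_{0,T} \leq \norm{\vec{v}-\vec{w}}_{0,T} + \norm{\bdmi(\vec{v}-\vec{w})}_{0,T}.
\end{equation*}

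First I would estimate the second term by applying the stability estimate \eqref{eq:stability_RVP} with $\vec{v}$ replaced by $\vec{v}-\vec{w}$, which yields
\begin{equation*}
	\norm{\vec{v}-\bdmi\vec{v}}_{0,T} \lesssim \norm{\vec{v}-\vec{w}}_{0,T} + \sum_{j\in I_d} h_j\norm{\pdv{(\vec{v}-\vec{w})}{\vec{l}_j}}_{0,T} + h_T\norm{\divt(\vec{v}-\vec{w})}_{0,T}.
\end{equation*}
The key structural point is that \autoref{lem:interpolation} provides a \emph{single} polynomial $\vec{w}\in\vec{P}_m(T)$ that simultaneously controls all three right-hand side terms---this is possible because the polynomial from the underlying Deny-Lions result is independent of the differentiation order---so I may insert its three bounds directly.

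The value term is immediately bounded by $\sum_{\abs{\alpha}=m+1}h^\alpha\norm{D^\alpha_{\vec{l}}\vec{v}}_{0,T}$, and the divergence term by $h_T\cdot h_T^m\norm{D^m\divt\vec{v}}_{0,T}=h_T^{m+1}\norm{D^m\divt\vec{v}}_{0,T}$, matching the two terms of \eqref{eq:interpolation_error_RVP}. The only part requiring care---and the main bookkeeping obstacle---is the directional-derivative sum. Using the corresponding estimate of \autoref{lem:interpolation}, each summand is controlled by
\begin{equation*}
	h_j\norm{\pdv{(\vec{v}-\vec{w})}{\vec{l}_j}}_{0,T} \lesssim \sum_{\abs{\alpha}=m} h^{\alpha+\vec{e}_j}\norm{D^{\alpha+\vec{e}_j}_{\vec{l}}\vec{v}}_{0,T},
\end{equation*}
since the prefactor $h_j\,h^\alpha$ with $\abs{\alpha}=m$ equals $h^{\alpha+\vec{e}_j}$ and the extra derivative falls in direction $\vec{l}_j$. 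Every multi-index $\beta$ with $\abs{\beta}=m+1$ and $\beta_j\geq 1$ arises as $\alpha+\vec{e}_j$, so summing over $j\in I_d$ reassembles exactly $\sum_{\abs{\beta}=m+1}h^\beta\norm{D^\beta_{\vec{l}}\vec{v}}_{0,T}$, with a harmless multiplicity bounded by $d$. Collecting the contributions gives \eqref{eq:interpolation_error_RVP}, and the constant depends only on $\bar{c}$ and $k$, inherited from \autoref{th:stability_RVP} and \autoref{lem:interpolation}.
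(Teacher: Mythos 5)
Your proposal is correct and follows essentially the same route as the paper's proof: the decomposition $\vec{v}-\bdmi\vec{v} = (\vec{v}-\vec{w}) - \bdmi(\vec{v}-\vec{w})$ via polynomial reproduction, the triangle inequality, the stability estimate of \autoref{th:stability_RVP}, and the simultaneous bounds of \autoref{lem:interpolation} for a single $\vec{w}\in\vec{P}_m(T)$. Your explicit multi-index bookkeeping for the directional-derivative sum and your remark that one polynomial controls all three terms are exactly the details the paper's proof leaves implicit.
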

\begin{proof}
	Due to the properties of the interpolation operator and $m\leq k$, the equality
	\begin{equation}\label{eq:interpolation_property}
		\vec{v}- \bdmi\vec{v} = \vec{v} - \vec{w} - \bdmi(\vec{v}-\vec{w})
	\end{equation}
	holds for an arbitrary function $\vec{w}\in \vec{P}_m(T)$. Now using the triangle inequality, \autoref{th:stability_RVP} and choosing $\vec{w}\in \vec{P}_m (T)$ as in \autoref{lem:interpolation}, we get
	\begin{align*}
		\norm{\vec{v}-\bdmi \vec{v}}_{0,T} &\leq \norm{\vec{v} - \vec{w}}_{0,T} +  \norm{\bdmi(\vec{v}-\vec{w})}_{0,T} \\
		&\lesssim \norm{\vec{v} - \vec{w}}_{0,T} + \sum_{i,j\in I_d} h_j \norm{\pdv{(v_i - w_i)}{\vec{l}_j}}_{0,T} + h_T\norm{\divt (\vec{v} - \vec{w})}_{0,T} \\
		&\lesssim \sum_{\abs{\alpha} = m + 1} h^\alpha \norm{D^\alpha_{\vec{l}}\vec{v}}_{0,T} + h_T^{m+1} \norm{D^m\divt \vec{v}}_{0,T}. \qedhere
	\end{align*}
\end{proof}

For elements only satisfying a maximum angle condition, we get a weaker estimate.
\begin{theorem}\label{th:interpolation_error_MAC}
	Let $k\geq 1$, and let $T$ be an element satisfying $\MAC(\bar{\phi})$. Then for $0\leq m \leq k$ and $\vec{v}\in \vec{H}^{m+1}(T)$ the estimate 
	\begin{equation}\label{eq:interpolation_error_MAC}
		\norm{\vec{v}-\bdmi\vec{v}}_{0,T} \lesssim h_T^{m+1} \norm{D^{m+1}\vec{v}}_{0,T}
	\end{equation}
	holds, where the constant only depends on $\bar{\phi}$ and $k$.
\end{theorem}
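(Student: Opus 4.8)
The plan is to mirror the structure of the regular-vertex case (\autoref{th:interpolation_error_RVP}), replacing the role of \autoref{th:stability_RVP} by the weaker maximum-angle stability estimate \autoref{th:stability_MAC}, and the role of the anisotropic Bramble--Hilbert \autoref{lem:interpolation} by its isotropic counterpart. The key observation is that, since $T$ only satisfies $\MAC(\bar{\phi})$, we cannot extract the favorable directional factors $h^\alpha$ that appear in \eqref{eq:interpolation_error_RVP}; instead we must content ourselves with a uniform $h_T^{m+1}$ in front of the full derivative $D^{m+1}\vec{v}$.

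\textbf{Key steps.} First I would invoke the projection property of the interpolation operator: since $m \leq k$, the operator $\bdmi$ reproduces $\vec{P}_m(T)\subset \vec{P}_k(T)$, so that for every $\vec{w}\in\vec{P}_m(T)$ the identity
\begin{equation*}
	\vec{v}-\bdmi\vec{v} = (\vec{v}-\vec{w}) - \bdmi(\vec{v}-\vec{w})
\end{equation*}
holds, exactly as in \eqref{eq:interpolation_property}. Applying the triangle inequality and then \autoref{th:stability_MAC} to the second term gives
\begin{equation*}
	\norm{\vec{v}-\bdmi\vec{v}}_{0,T} \lesssim \norm{\vec{v}-\vec{w}}_{0,T} + h_T\sum_{j\in I_d}\norm{\pdv{(\vec{v}-\vec{w})}{x_j}}_{0,T}.
\end{equation*}
Next I would choose $\vec{w}\in\vec{P}_m(T)$ to be the Deny--Lions/Bramble--Hilbert polynomial associated with $\vec{v}$, applied componentwise; here the isotropic Bramble--Hilbert lemma (the $k=0$ and $k=1$ cases of the quoted Deny--Lions lemma, transported from the reference element via the affine map of \autoref{lem:MAC}) furnishes
\begin{equation*}
	\norm{\vec{v}-\vec{w}}_{0,T} \lesssim h_T^{m+1}\abs{\vec{v}}_{m+1,T}, \qquad \abs{\vec{v}-\vec{w}}_{1,T}\lesssim h_T^{m}\abs{\vec{v}}_{m+1,T}.
\end{equation*}
Substituting these two bounds, the first term is already of the form $h_T^{m+1}\norm{D^{m+1}\vec{v}}_{0,T}$, and the second term carries an extra factor $h_T$ multiplying an $h_T^{m}$ bound, so it too collapses to $h_T^{m+1}\norm{D^{m+1}\vec{v}}_{0,T}$, yielding \eqref{eq:interpolation_error_MAC}.

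\textbf{Main obstacle.} The nontrivial point is controlling the transformation constants when moving the Bramble--Hilbert estimate from the reference element $\bar T$ (or $\hat T$) to the general element $T$ under the Piola map. Because $\bdmi$ is defined via the contra-variant Piola transform, one must verify that the stability constant of \autoref{th:stability_MAC} and the Bramble--Hilbert constants combine to depend only on $\bar\phi$ and $k$, using the uniform bounds $\norm{J_T}_\infty,\norm{J_T^{-1}}_\infty\leq C(\bar\phi)$ from \autoref{lem:MAC}. The bookkeeping of the Jacobian factors $\det J_T$ and the size parameters $h_i$ is where the anisotropy is genuinely lost: unlike \autoref{th:interpolation_error_RVP}, here each $h_i$ must be crudely bounded by $h_T$, which is precisely why only the weaker isotropic-order estimate survives. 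Everything else follows the reference proof verbatim.
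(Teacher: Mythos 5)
Your proposal is correct and takes essentially the same route as the paper: the paper's proof is precisely "repeat the argument of \autoref{th:interpolation_error_RVP}, replacing the stability estimate \eqref{eq:stability_RVP} by \eqref{eq:stability_MAC}," which is your plan. Your extra care in transporting the Bramble--Hilbert constants through the uniformly bounded maps of \autoref{lem:MAC} (so they depend only on $\bar{\phi}$ and $k$, with each $h_i$ crudely bounded by $h_T$) is exactly the content the paper leaves implicit in its one-line proof.
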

\begin{proof}
	The proof follows along the same steps as that of \autoref{th:interpolation_error_RVP}, where now the stability estimate \eqref{eq:stability_MAC} is used.
\end{proof}

\begin{example}[Weaker estimate without regular vertex property]\label{rem:weaker_estimate}
	The estimates under just a maximum angle condition, \eqref{eq:stability_MAC} and \eqref{eq:interpolation_error_MAC}, are clearly weaker than the ones under a regular vertex property, \eqref{eq:stability_RVP} and \eqref{eq:interpolation_error_RVP}. In some applications, e.g. the Stokes equations, when using appropriate finite element methods, the divergence term vanishes elementwise, so that under a regular vertex property we are left with a completely anisotropic estimate, while under a maximum angle condition and lacking a regular vertex property the individual derivatives enter the estimate with the element diameter as coefficient and we do not gain an advantage from using a small element size parameter. 
	
	An estimate of type \eqref{eq:interpolation_error_RVP} can not be achieved for elements not satisfying a regular vertex property, as we show by the following example. Consider the function $\vec{u}(\vec{x}) = ({x}_1 {x}_3, -{x}_2 {x}_3,0)^T$ on the tetrahedron $\tilde{T}$ with vertices at $\vec{p}_1 = (0,0,0)^T$, $\vec{p}_2 = (h_1,0,0)^T$, $\vec{p}_3 = (0,0,h_3)^T$, $\vec{p}_4 = (0,h_2,h_3)^T$, which is a rotated version of the tetrahedron pictured in \autoref{fig:ReferenceTetrahedron_noRVP}. Then its lowest order Brezzi-Marini-Douglas interpolant is
	\begin{equation*}
		(\bdmiorder{1} \vec{u}) (\vec{x}) = h_3 \begin{pmatrix} \frac{2}{5} x_1 \\ -\frac{3}{5} x_2 \\ -\frac{h_3}{10} + \frac{1}{5} x_3 \end{pmatrix}.
	\end{equation*}
	By directly calculating the norms, we get
	\begin{align*}
		&&&\norm{\vec{u} - \bdmiorder{1}\vec{u}}_{0,\tilde{T}} \lesssim \sum_{\abs{\alpha} = 1} h^\alpha \norm{D^\alpha \vec{u}}_{0,\tilde{T}} + h_{\tilde{T}} \norm{\divt \vec{u}}_{0,\tilde{T}} \\
		&\Leftrightarrow & &\Bigg(\sum_{i = 1}^3 \norm{{u}_i - (\bdmiorder{1}\vec{u})_i}_{0,\tilde{T}}^2\Bigg)^{\nicefrac{1}{2}} \lesssim \sum_{i = 1}^3 h_i \left( \norm{\pdv{u_1}{x_i}}_{0,\tilde{T}}^2 + \norm{\pdv{u_2}{x_i}}_{0,\tilde{T}}^2\right)^{\nicefrac{1}{2}} \\
		&\Leftrightarrow & &\left( \frac{38 h_1^2 + 38 h_2^2 + 21 h_3^2}{3150} \right)^{\nicefrac{1}{2}} \lesssim h_1 + h_2 + \left( \frac{h_1^2 + h_2^2}{3} \right)^{\nicefrac{1}{2}} \\
		&\Leftrightarrow & &21 h_3^2 \lesssim 4162(h_1^2 + h_2^2) + 6300\left(h_1 h_2 + \left(\frac{h_1^4 + h_1^2 h_2^2}{3}\right)^{\nicefrac{1}{2}} + \left(\frac{h_1^2 h_2^2 + h_2^4}{3}\right)^{\nicefrac{1}{2}}\right).
	\end{align*}
	Inspecting now both sides of the estimate, we see that for a sufficiently stretched element in the $x_3$ direction, i.e. $h_3 \gg h_1,h_2$, the inequality and thus the interpolation estimate does not hold. 

\end{example}

\section{Application to the Stokes equations}\label{sec:application}
In this final section, we give a short numerical example to illustrate that when using a finite element method with Brezzi-Douglas-Marini elements, elements with large aspect ratio do not negatively influence the convergence characteristics, and are beneficial for adequate problems.
\begin{figure}[t]
	\centering
	\subcaptionbox{Plot of the exact velocity solution for $\epsilon=0.01$\label{fig:uexact}}[0.45\textwidth]{
	\includegraphics{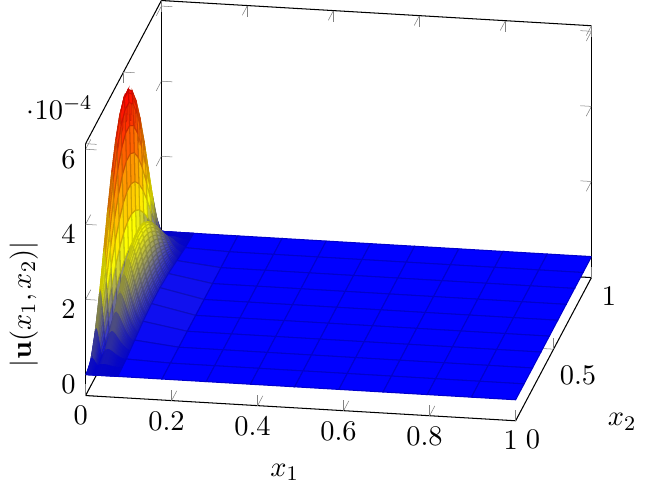}
	}\hspace{\fill}
\begin{subfigure}[t]{.45\textwidth}
	\centering
	\includegraphics{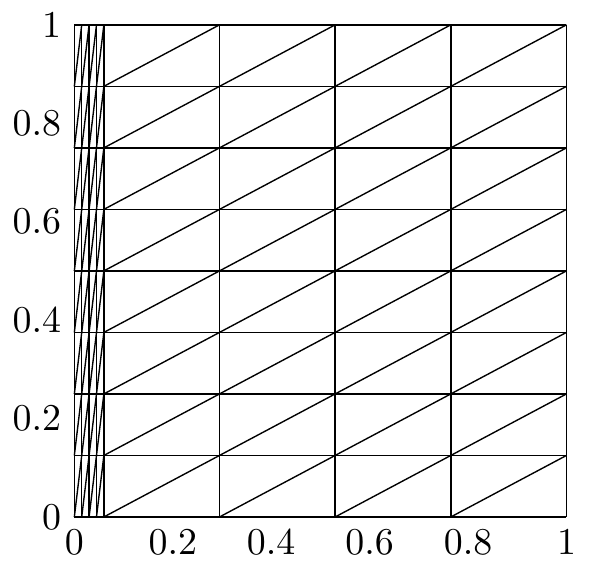}
	\caption{Shishkin type mesh for $\epsilon=0.01$, $N=2^3$, maximal aspect ratio $\sigma\approx 8.9$}
	\label{fig:mesh_ex1}
	\end{subfigure}\hspace{\fill}
	\caption{Exact velocity and example mesh used in the calculations}
	\label{fig:uexact_mesh}
\end{figure}
\begin{figure}[t]
	\centering
	\begin{subfigure}[t]{.475\textwidth}
	\centering
	\includegraphics{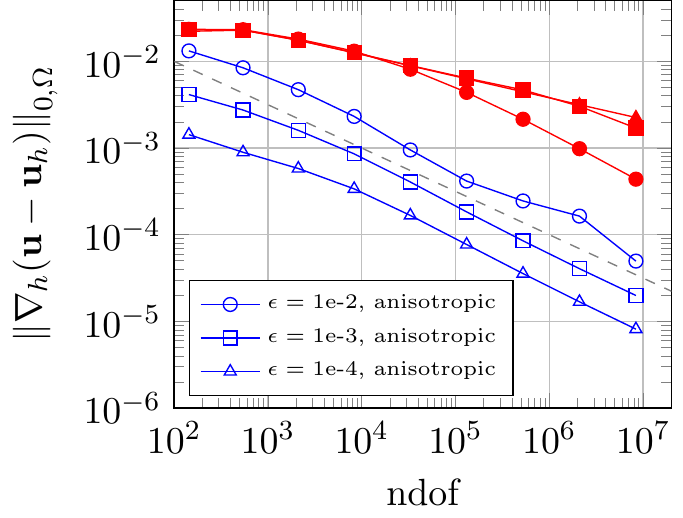}
	\end{subfigure}\hspace{\fill}
	\begin{subfigure}[t]{.475\textwidth}
	\centering
	\includegraphics{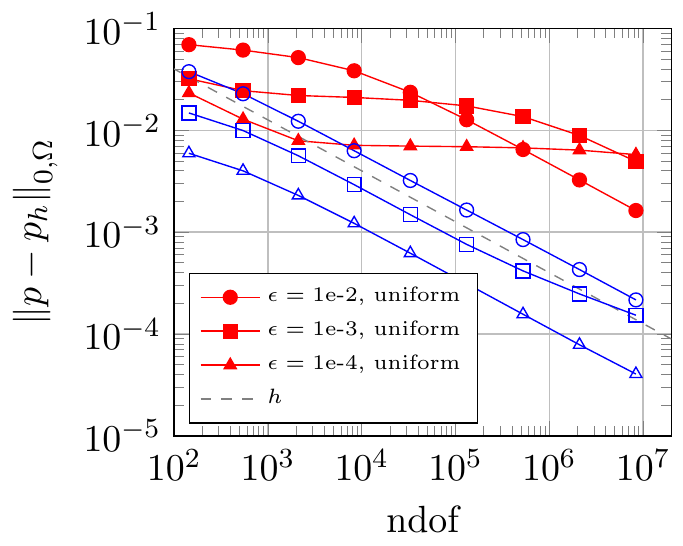}
	\end{subfigure}
	\caption{Convergence plots of the discrete velocity and pressure solutions for various values for the parameter $\epsilon$}
	\label{fig:error_ex1}
\end{figure}

The example, see also \cite{CreuseNicaiseKunert2004}, considers the steady Stokes equations on the unit square $\Omega = (0,1)^2$ in the form
\begin{align*}\label{eq:stokescont}
	-\nu \Delta\vec{u} + \nabla p &= \vec{f} \quad \text{on } \Omega,	\\
	-\nabla \cdot \vec{u} &= 0 \quad \text{on } \Omega,	\\
	\vec{u} &= \vec{g} \quad \text{on } \partial \Omega.
\end{align*}
For a complete introduction to the Stokes equations, we refer to \cite{GiraultRaviart1986}, and keep our text short.
We follow \cite{SchroederLehrenfeldLinkeLube2018} and use a mixed finite element method with lowest-order Brezzi-Douglas-Marini elements and piecewise constants for the velocity and pressure approximation, respectively. As this method is not $\vec{H}^1$-conforming, we use means from the discontinuous Galerkin framework, as described in e.g. \cite{SchroederLehrenfeldLinkeLube2018,DiPietroErn2012,CockburnKanschatSchotzau2007,CockburnKanschatSchotzauSchwab2002}. We refer to these references for the details on discontinuous Galerkin methods and their application to the Stokes equations. In particular, we employ the symmetric interior penalty formulation of the bilinear form $a_h(\vec{u}_h,\vec{v}_h)$, which is defined by, see \cite{SchroederLehrenfeldLinkeLube2018},
\begin{align*}
	a_h(\vec{u}_h, \vec{v}_h) &= \nu \int_\Omega \nabla_h \vec{u}_h : \nabla_h \vec{v}_h \dd \vec{x} \\
	&\hphantom{=} - \nu \sum_{e\in \mathcal{F}_h} \int_e \left( \avg{\nabla\vec{u}_h} \vec{n}_e \cdot \jump{\vec{v}_h} + \jump{\vec{u}_h} \cdot \avg{\nabla\vec{v}_h} \vec{n}_e - \frac{\gamma}{h_e} \jump{\vec{u}_h} \cdot \jump{\vec{v}_h}\right)\dd \vec{s},
\end{align*}
where $\avg{\cdot}$ and $\jump{\cdot}$ denote the average and jump of a function on a facet, and $\gamma$ is the jump penalization parameter.
For the calculations we choose the exact solution $(\vec{u},p)$
\begin{align*}
	&\vec{u}(\vec{x}) = 	\left(\pdv{\xi}{x_2}, -\pdv{\xi}{x_1}\right), \\
	&p(\vec{x}) = \exp(-\frac{x_1}{\epsilon}),
\end{align*}
where the stream function is defined as $\xi(\vec{x}) = x_1^2(1-x_1)^2x_2^2(1-x_2)^2\exp(-\frac{x_1}{\epsilon})$. \autoref{fig:uexact} shows a plot of the magnitude of the exact velocity for the parameter value $\epsilon = 0.01$, where the exponential boundary layer near $x_1=0$ is clearly visible. The layer has a width of $\mathcal{O}(\epsilon)$ and is also present in the pressure solution. For the calculations we use Shishkin type meshes as pictured in \autoref{fig:mesh_ex1}, which are for a parameter $N\geq 2$ constructed in the following way. For a transition point parameter $\tau \in (0,1)$ generate a grid of points $(x_1^i, x_2^j)$, 
\begin{align*}
	x_1^i &= \begin{cases}
				i\frac{2\tau}{N}, & 0\leq i\leq \frac{N}{2}, i\in \N, \\
				\tau + \left( i - \frac{N}{2}\right)\frac{2(1-\tau)}{N}, & \frac{N}{2} < i \leq N, i\in \N,
			\end{cases} \\
	x_2^j &= \frac{j}{N}, \quad 0\leq j\leq N, j \in \N.
\end{align*}
Now connect the grid points with edges to get a rectangular mesh, and subdivide each rectangle into two triangles. 
Like this we get a triangulation of $\Omega$ with $n=2N^2$ elements and an aspect ratio of $\sigma = \frac{\sqrt{1+4\tau^2}}{1+2\tau-\sqrt{1+4\tau^2}}$, see \autoref{fig:mesh_ex1}. 
For the parameters in the following calculations we choose $\nu = 1$ and $\tau = \min\{\frac{1}{2}, 3\epsilon \abs{\ln(\epsilon)}\}$, so that the anisotropic elements cover approximately three times the boundary layer width. 
Initially following \cite{SchroederLehrenfeldLinkeLube2018} for the value of the jump penalization parameter, we finally included a slight dependence on the aspect ratio and set $\gamma = 4 k^2\lceil\log(\sigma)\rceil = 4\lceil\log(\sigma)\rceil$, so that $\gamma \sim \abs{\log(\epsilon)}$ for small $\epsilon$.

From e.g. \cite{SchroederLehrenfeldLinkeLube2018,Schroeder2019,CockburnKanschatSchotzau2007} we know that for a discrete solution $(\vec{u}_h, p_h)$ of our method on a shape regular triangulation, the quantities $\norm{\nabla_h (\vec{u} - \vec{u}_h)}_{0,\Omega}$ and $\norm{p - p_h}_{0,\Omega}$ should have an order of convergence of $1$. In particular, \cite[Theorem 5.4]{Schroeder2019} states, that under the above assumptions the estimate
\begin{equation*}
	\norm{\nabla_h (\vec{u} - \vec{u}_h)}_{0,\Omega} \lesssim h \norm{\vec{u}}_{1,\Omega}
\end{equation*}
holds, and by applying our result \autoref{th:interpolation_error_MAC}, we can deduce an analogous estimate for the anisotropic triangulations. This means that under the assumption of a Lipschitz domain, and $\vec{H}^2 \times H^1$ regularity of the solution $(\vec{u},p)$ of the Stokes problem, the assumption of a shape regular triangulation can be relaxed to a maximum angle condition, while still retaining optimal convergence characteristics.

\autoref{fig:error_ex1} shows, that the convergence rate for the Shishkin type meshes is optimal, and the error is by orders of magnitude lower compared to the uniform meshes. It shows also, that the uniform meshes do not reach the optimal convergence rate, until the boundary layer is resolved sufficiently. This does not happen in our calculation for the parameter value $\epsilon = 10^{-4}$, where for the finest mesh with $8\, 392\, 704$ degrees of freedom the mesh size parameter is $h\approx\text{1.38e-3}$ and the boundary layer is $\tau\approx\text{4.0e-4}$ wide.

So in conclusion, anisotropic elements significantly reduce the error compared with the uniform meshes and the optimal convergence rate is achieved much sooner. 

\printbibliography

\end{document}